\def\CC{{\mathbb C}}
\def\QQ{{\mathbb Q}}
\def\PP{{\mathbb P}}
\def\QQ{{\mathbb Q}}
\def\RR{{\mathbb R}}
\def\ZZ{{\mathbb Z}}
\def\Qbar{\overline{\mathbb Q}}
\def\0{{\mathbf 0}}
\def\1{{\mathbf 1}}
\def\Gal{\mathrm{Gal}}
\def\GL{\mathrm{GL}}
\def\supp{\mathrm{supp}}
\newtheorem{thm}{Theorem}
\newtheorem{lemma}[thm]{Lemma}
\newtheorem{cor}[thm]{Corollary}
\newtheorem*{thm*}{Theorem}
\newtheorem*{alg*}{Algorithm}
\newtheorem*{lemma*}{Lemma}
\theoremstyle{remark}
\newtheorem*{rmk*}{Remark}
\newtheorem*{notation*}{Notation}
\newtheorem{example}[thm]{Example}
\newtheorem*{example*}{Example}
\theoremstyle{definition}
\newtheorem*{defn*}{Definition}
\newcommand{\mybf}{\mathbb}
\newcommand{\bP}{\mybf{P}}
\newcommand{\bR}{\mybf{R}}
\newcommand{\bC}{\mybf{C}}
\newcommand{\bQ}{\mybf{Q}}
\newcommand{\al}{\alpha}
\providecommand{\abs}[1]{\lvert#1\rvert}
\def\talltareesidedbox#1{\setbox0=\hbox{$#1$}\dimen0=\wd0 \advance\dimen0 by3pt\rlap{\hbox{\vrule height10pt width.4pt
 depth2pt \kern-.4pt\vrule height10.4pt width\dimen0 depth-10pt\kern-.4pt \vrule height10pt width.4pt depth2pt}}
 \relax \hbox to\dimen0{\hss$#1$\hss}}
\def\tareesidedbox#1{\setbox0=\hbox{$#1$}\dimen0=\wd0 \advance\dimen0 by3pt\rlap{\hbox{\vrule height8pt width.4pt
 depth2pt \kern-.4pt\vrule height8.4pt width\dimen0 depth-8pt\kern-.4pt \vrule height8pt width.4pt depth2pt}}
\relax \hbox to\dimen0{\hss$#1$\hss}}
\def\shorttareesidedbox#1{\setbox0=\hbox{$#1$}\dimen0=\wd0 \advance\dimen0 by3pt\rlap{\hbox{\vrule height7pt width.4pt
 depth2pt \kern-.4pt\vrule height7.4pt width\dimen0 depth-7pt\kern-.4pt \vrule height7pt width.4pt depth2pt}}
 \relax \hbox to\dimen0{\hss$#1$\hss}}
\newcommand{\har}{h_{\mathrm{Ar}}}
\newcommand{\hweil}{h_{\mathrm{Weil}}}
\title{Energy integrals and small points for the Arakelov height}
\author[Fili]{Paul Fili}
\address{Department of Mathematics\\ Oklahoma State University, Stillwater, OK 74078}
\email{fili@post.harvard.edu}
\author[Petsche]{Clayton Petsche}
\address{Department of Mathematics\\ Oregon State University, Corvallis, OR 97331}
\email{petschec@math.oregonstate.edu}
\author[Pritsker]{Igor Pritsker}
\address{Department of Mathematics\\ Oklahoma State University, Stillwater, OK 74078}
\email{igor@math.okstate.edu}
 \subjclass[2010]{11G50, 11R06, 37P30}
 \keywords{Arakelov height, small points, totally real, totally $p$-adic, splitting conditions, equilibrium measure.}
\date{\today}
\begin{document}

\begin{abstract}
We study small points for the Arakelov height on the projective line.  First, we identify the smallest positive value taken by the Arakelov height, and we characterize all cases of equality.  Next we solve several archimedean energy minimization problems with respect to the chordal metric on the projective line, and as an application, we obtain lower bounds on the Arakelov height in fields of totally real and totally $p$-adic numbers.
\end{abstract}

\maketitle

\section{Introduction}

The Arakelov height function $\har:\PP^1(\Qbar)\to\RR$ is defined by the formula 
\begin{equation}\label{ArakelovDef}
\har(\alpha)=\frac{1}{[K:\QQ]}\sum_{v\in M_\QQ}\sum_{\sigma:K\hookrightarrow\CC_v}\log\|(\sigma(\alpha_0),\sigma(\alpha_1))\|_v.
\end{equation}
Here $\alpha=(\alpha_0:\alpha_1)$ is a point in $\PP^1(\Qbar)$, $K$ is any number field containing the coordinates of $\alpha$, and the outer sum ranges over the set $M_\QQ=\{\infty,2,3,5,7,\dots\}$ of all places $v$ of $\QQ$.  For each place $v\in M_\QQ$, the inner sum in $(\ref{ArakelovDef})$ ranges over all $[K:\QQ]$ distinct embeddings $\sigma:K\hookrightarrow\CC_v$, and the norm $\|\cdot\|_v:\CC_v^2\to\RR$ is defined by
\begin{equation}\label{LocalNormDef}
\|(x_0,x_1)\|_v = 
\begin{cases}
(|x_0|_v^2+|x_1|_v^2)^{1/2} & \text{ if $v=\infty$} \\
\max(|x_0|_v,|x_1|_v) & \text{ if $v=2,3,5,7,\dots$}
\end{cases}
\end{equation}
where $|\cdot|_v$ denotes the absolute value on $\CC_v$, normalized to coincide with either the standard real or $p$-adic absolute value when restricted to $\QQ$.  The product formula
\begin{equation}\label{ProdForm}
\sum_{v\in M_\QQ}\sum_{\sigma:K\hookrightarrow\CC_v}\log|\sigma(x)|_v=0 \hskip2cm (x\in K^\times)
\end{equation}
ensures that the value of $\har(\alpha)$ does not depend on the choice of homogeneous coordinates for $\alpha$, and standard properties of field extensions ensure that the value of $\har(\alpha)$ does not depend on the choice of the number field $K$.  For more information on $\har$ see, e.g., \cite{bombierigubler} $\S$~2.8 or \cite{MR1491857}.

The definition of $\har$ should be compared with the definition of the standard Weil height function $\hweil$.  To obtain $\hweil$, rather than the formula $(\ref{LocalNormDef})$ involving the $\ell^2$-norm at the archimedean place, one instead uses the sup-norm $\|(x_0,x_1)\|_v =\max(|x_0|_v,|x_1|_v)$ at both the archimedean and non-archimedean places.  Many authors have studied questions surrounding points of small Weil height and their properties; for example, these questions are closely related to the well-known question of Lehmer \cite{MR1503118} on polynomials of small Mahler measure, results of Schinzel \cite{MR0360515} and Bombieri-Zannier \cite{MR1898444} on small totally real and totally $p$-adic points, and the theorem of Bilu \cite{MR1470340} on the equidistribution of points of small Weil height.

Despite the fact that the Arakelov height is perhaps the second most well-studied elementary example of an absolute height function on projective space (in the sense of Weil), the topic of small points with respect to $\har$ has been comparatively neglected.  

We first make a trivial observation.  Identifying $\PP^1(\Qbar)=\Qbar\cup\{\infty\}$ by setting $\alpha=(\alpha:1)$ and $\infty=(1:0)$, it follows at once from the definition that $\har(\alpha)\geq0$ for all $\alpha\in\PP^1(\Qbar)$, with $\har(\alpha)=0$ if and only if $\alpha=0$ or $\infty$.  (Compare with $\hweil$, which vanishes precisely at $0,\infty$, and the roots of unity.)  

Our first result specifies the next smallest value taken on by the Arakelov height and characterizes all of the (infinitely many) cases of equality.  

\begin{thm}\label{ElementaryLBThm}
The lower bound $\har(\alpha)\geq\frac{1}{2}\log 2$ holds for all $\alpha\in\PP^1(\Qbar)\setminus\{0,\infty\}$.  Equality $\har(\alpha)=\frac{1}{2}\log 2$ holds if and only if $\alpha$ is a root of unity.
\end{thm}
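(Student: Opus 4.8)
The plan is to reduce the statement to the minimal polynomial of $\alpha$ over $\ZZ$, split off the archimedean and non-archimedean contributions separately, and then finish with the AM--GM inequality together with the integrality of the coefficients. Fix $\alpha\in\Qbar\setminus\{0\}$ and write $\alpha=(\alpha:1)$. Since $\har$ does not depend on the field containing the coordinates, take $K=\QQ(\alpha)$, $d=[K:\QQ]$, and let $f(x)=a_0x^d+\dots+a_d\in\ZZ[x]$ be the primitive minimal polynomial of $\alpha$ with $a_0>0$, with complex roots $\alpha_1,\dots,\alpha_d$. The archimedean part is immediate: the $d$ embeddings $K\hookrightarrow\CC$ have images $\alpha_1,\dots,\alpha_d$, so $\sum_{\sigma:K\hookrightarrow\CC}\log\|(\sigma\alpha,1)\|_\infty=\tfrac12\sum_{i=1}^d\log(|\alpha_i|^2+1)$.

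Next I would compute the total non-archimedean contribution and identify it as $\log a_0$. For a fixed prime $p$, the embeddings $\sigma:K\hookrightarrow\CC_p$ are in bijection with the roots of $f$ in $\CC_p$ (sending $\sigma$ to $\sigma(\alpha)$), so the inner sum is $\sum_{\beta:\,f(\beta)=0}\log\max(|\beta|_p,1)$. Because $f$ is primitive, its Gauss norm over $\QQ_p$ equals $1$; factoring $f=a_0\prod_\beta(x-\beta)$ over $\CC_p$ and using multiplicativity of the Gauss norm on $\CC_p[x]$ together with the fact that the Gauss norm of $x-\beta$ is $\max(1,|\beta|_p)$, one gets $1=|a_0|_p\prod_\beta\max(1,|\beta|_p)$, hence the inner sum equals $-\log|a_0|_p$. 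Summing over all $p$ and applying the product formula to $a_0\in\QQ^\times$ gives $\sum_p\sum_{\sigma:K\hookrightarrow\CC_p}\log\max(|\sigma\alpha|_p,1)=-\log\prod_p|a_0|_p=\log a_0$. Combining the two parts yields the clean formula $d\,\har(\alpha)=\tfrac12\sum_{i=1}^d\log(|\alpha_i|^2+1)+\log a_0$, equivalently $e^{2d\,\har(\alpha)}=a_0^2\prod_{i=1}^d(|\alpha_i|^2+1)$.

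Now the inequality falls out. By AM--GM, $|\alpha_i|^2+1\ge 2|\alpha_i|$ for each $i$, so $\prod_{i=1}^d(|\alpha_i|^2+1)\ge 2^d\prod_{i=1}^d|\alpha_i|=2^d|a_d|/a_0$, the last equality because $\prod_i|\alpha_i|=|a_d/a_0|$. Therefore $e^{2d\,\har(\alpha)}\ge 2^d a_0|a_d|\ge 2^d$, since $a_0$ and $|a_d|$ are positive integers (note $a_d\ne 0$ because $\alpha\ne 0$), which gives $\har(\alpha)\ge\tfrac12\log 2$. For equality one needs $|\alpha_i|=1$ for every $i$ and $a_0|a_d|=1$, forcing $a_0=1$; thus $\alpha$ is an algebraic integer all of whose conjugates lie on the unit circle, and Kronecker's theorem then forces $\alpha$ to be a root of unity. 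Conversely, if $\alpha$ is a root of unity then $f$ is a cyclotomic polynomial, so $a_0=1$, $|a_d|=|f(0)|=1$, and $|\alpha_i|=1$ for all $i$, making both inequalities equalities and giving $\har(\alpha)=\tfrac12\log 2$.

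The step requiring the most care is the non-archimedean computation of paragraph two, namely the passage through the Gauss norm over $\CC_p$ and the identification of $p$-adic embeddings with roots of $f$; everything else is elementary, the only external input being Kronecker's theorem, which enters only in the equality analysis.
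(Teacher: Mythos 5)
Your proof is correct, but it takes a genuinely different route from the paper. You reduce everything to the primitive minimal polynomial $f(x)=a_0x^d+\dots+a_d$, evaluate the total non-archimedean contribution as $\log a_0$ via Gauss's lemma over $\CC_p$ together with the product formula, and arrive at the clean identity $e^{2d\,\har(\alpha)}=a_0^2\prod_{i=1}^d(1+|\alpha_i|^2)$; the bound then follows from AM--GM ($1+|\alpha_i|^2\ge 2|\alpha_i|$), Vieta ($\prod_i|\alpha_i|=|a_d|/a_0$), and the integrality $a_0|a_d|\ge 1$. The paper instead stays entirely adelic: it writes $d\,\har(\alpha)$ as $\sum_\sigma\tfrac12\log(1+|\sigma\alpha|^2)+\sum_{p,\sigma}\log^+|\sigma\alpha|_p$ and proves a purely analytic lemma --- for arbitrary positive reals $a_j,b_k$ with $a_1\cdots a_d b_1\cdots b_r=1$ one has $\tfrac1d\bigl(\sum_j\tfrac12\log(1+a_j^2)+\sum_k\log^+b_k\bigr)\ge\tfrac12\log 2$, shown by a monotonicity argument in the exponent $t\mapsto L(a_1^t,\dots,b_r^t)$ --- so the only arithmetic inputs are the product formula and, for the equality case, Kronecker's theorem, exactly as in your argument. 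What your approach buys is the explicit Mahler-measure-type formula for $\har$ and a very short inequality step; what it costs is the extra apparatus of $p$-adic embeddings and the multiplicativity of the Gauss norm, plus reliance on integrality of the coefficients, which the paper's lemma shows is not actually needed (the product formula alone suffices for the lower bound). Both treatments of the equality case (all conjugates on the unit circle, $\alpha$ an algebraic integer, then Kronecker) coincide, and your converse direction for roots of unity is fine.
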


Next, we turn to the question of giving lower bounds for $\har(\alpha)$ under the assumption that the conjugates of $\al\in\Qbar$ satisfy certain splitting conditions.  For example, if all complex embeddings of $\alpha$ lie in $\bR$, one says that $\al$ is totally real; similarly, if all embeddings of $\al$ into $\CC_p$ lie in $\bQ_p$, one says that $\al$ is totally $p$-adic.  In \cite{FiliPetsche}, the first two authors used potential theoretic techniques to minimize a certain energy integral, and used this to obtain lower bounds on the Weil height $\hweil(\alpha)$ under splitting conditions.  

In the present paper, we carry out similar investigations for the Arakelov height.  Recall that, for each place $v\in M_\QQ$, the standard projective metric is defined by 
\begin{equation*}
\delta_v:\PP^1(\CC_v)\times\PP^1(\CC_v)\to[0,1] \hskip1cm \delta_v(x,y) =\frac{|x_0y_1-y_0x_1|_v}{\|(x_0,x_1)\|_v\|(y_0,y_1)\|_v}
\end{equation*}
for $x=(x_0:x_1)$ and $ y=(y_0:y_1)$ in $\PP^1(\CC_v)$, where the norm $\|\cdot\|_v:\CC_v^2\to\RR$ is defined in $(\ref{LocalNormDef})$.  In the archimedean case, the metric $\delta_v$ coincides with (one half of) the chordal distance obtained by identiying $\PP^1(\CC)$ with the unit sphere in $\RR^3$ via stereographic projection.  

If $K$ is a number field and $\alpha,\beta\in\PP^1(K)$ are distinct, then $(\ref{ArakelovDef})$ and $(\ref{ProdForm})$ give
\begin{equation}\label{ArakelovDefSum}
\har(\alpha)+\har(\beta)=\frac{1}{[K:\QQ]}\sum_{v\in M_\QQ}\sum_{\sigma:K\hookrightarrow\CC_v}-\log\delta_v(\sigma(\alpha),\sigma(\beta)).
\end{equation}
In particular, given a point $\alpha\in\PP^1(\Qbar)$ of degree $d=[\QQ(\alpha),\QQ]\geq2$, let $\{\alpha_1,\dots,\alpha_d\}$ be the complete set of $\Gal(\Qbar/\QQ)$-conjugates of $\alpha$ in $\PP^1(\Qbar)$.  For each place $v\in M_\QQ$ we may view $\{\alpha_1,\dots,\alpha_d\}$ as a subset of $\PP^1(\CC_v)$ via some fixed embedding $\iota:\Qbar\hookrightarrow\CC_v$.  Then $(\ref{ArakelovDefSum})$ and the $\Gal(\Qbar/\QQ)$-invariance of the Arakelov height implies that 
\begin{equation}\label{ArakelovDefEnergySum}
\har(\alpha)=\frac{1}{2}\sum_{v\in M_\QQ}D_v(\alpha),
\end{equation}
where for each place $v\in M_\QQ$ we define the {\em energy sum} \begin{equation}\label{LocalEnergySum}
D_v(\alpha)=\frac{1}{d(d-1)}\sum_{\stackrel{1\leq i,j\leq d}{i\neq j}}-\log\delta_v(\alpha_i,\alpha_j).
\end{equation}

For large $d$, the energy sum $(\ref{LocalEnergySum})$ may be viewed as a discrete approximation to the energy integral
\begin{equation}\label{eqn:EnergyIntegral}
 I_v(\nu) = \iint_{\bP^1(\bC_v)\times \bP^1(\bC_v)} -\log \delta_v(x,y)\,d\nu(x)\,d\nu(y)
\end{equation}
associated to a unit Borel measure\footnote{For technical reasons, in the non-archimedean case it is better to consider the energy integral for Borel measures supported on the Berkovich projective line.  Since we focus here on the archimedean case, this subtlety does not affect the results of this paper.} $\nu$ on $\PP^1(\CC_v)$.  It is the passage from the sum $(\ref{LocalEnergySum})$ to the integral $(\ref{eqn:EnergyIntegral})$ which opens the door to potential theoretic techniques used in \cite{FiliPetsche} and in the present paper.  Local height sums similar to $(\ref{LocalEnergySum})$ have been considered by many authors in different contexts, notably in Arakelov intersection theory and arithmetic geometry, but the connections with potential theory have their origins in Rumely \cite{MR1710794} and Baker-Rumely \cite{MR2244226}.

In the non-archimedean case $v=p<\infty$, one has the trivial lower bounds $D_p\geq0$  and $I_p\geq0$ owing to the nonegativity of the potential kernel $-\log \delta_p(x,y)$.  But if one considers only those unit Borel measures $\nu$ supported on $\PP^1(\QQ_p)$, it was shown in Fili-Petsche \cite{FiliPetsche} that \begin{equation}\label{FiliPetscheNonArch}
I_p(\nu)\geq\frac{p\log p}{p^2-1}
\end{equation} 
with equality if and only if $\nu$ is the unique $\GL_2(\ZZ_p)$-invariant unit Borel measure on $\PP^1(\QQ_p)$.

Thus we turn our attention to the archimedean case $\CC_v=\CC$, and we drop the subscript $v$ to ease notation.  It is shown in \cite{MR0414898} $\S$~III.11 that if $E$ is a compact subset of $\PP^1(\CC)$ which is large enough (in a precise potential-theoretic sense), then there exists a unique Borel probability measure $\mu_E$ supported on $E$ which minimizes the energy integral $I(\nu)$ among all Borel probability measures $\nu$ supported on $E$.  We will give further potential-theoretic details in $\S$~\ref{PotentialTheorySect}.  The following results give explicit calculations for some sets $E$ of arithmetic interest.

\begin{thm}\label{thm:C}
When $E=\PP^1(\CC)$, the unique minimal energy measure $\mu_{\CC}$ on $\PP^1(\bC)$ is given explicitly by
\begin{equation}\label{ComplexMEMeas}
d\mu_{\CC}(z)=\frac{1}{\pi(1+|z|^2)^2}d\ell(z)
\end{equation}
where $\ell(z)$ is Lebesgue measure on $\CC=\PP^1(\CC)\setminus\{\infty\}$. The associated minimal energy is
\begin{equation*}
I(\mu_{\CC})=\frac{1}{2}.
\end{equation*}
\end{thm}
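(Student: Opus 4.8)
The plan is to verify directly that the measure $\mu_\CC$ defined by $(\ref{ComplexMEMeas})$ is the equilibrium measure for $E=\PP^1(\CC)$ with respect to the kernel $-\log\delta(x,y)$, and then compute its energy. First I would observe that $\mu_\CC$ is precisely the normalized Fubini--Study measure: under stereographic projection identifying $\PP^1(\CC)$ with the unit sphere $S^2\subset\RR^3$, the formula $\frac{1}{\pi(1+|z|^2)^2}\,d\ell(z)$ pulls back to (a constant multiple of) the uniform surface measure on $S^2$. This is a standard computation with the stereographic Jacobian, and it shows that $\mu_\CC$ is a Borel probability measure invariant under the rotation group $SO(3)$, which acts transitively on $S^2$ and preserves the chordal metric $\delta$.

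The key step is to show that the potential $U^{\mu_\CC}(x)=\int -\log\delta(x,y)\,d\mu_\CC(y)$ is constant on $\PP^1(\CC)$. By the $SO(3)$-invariance just noted, $U^{\mu_\CC}$ is invariant under all rotations, hence constant, since $SO(3)$ acts transitively. Once the potential is a constant $c$, standard potential theory (the Frostman-type characterization referenced in \cite{MR0414898} $\S$III.11, to be recalled in $\S$\ref{PotentialTheorySect}) gives that $\mu_\CC$ is the unique minimal energy measure and that $I(\mu_\CC)=c$. So it remains only to evaluate the constant.

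To compute $c$, I would place $x$ at the north pole (i.e. $x=\infty=(1:0)$, which corresponds to a pole of $S^2$) and integrate. In the chart $z\in\CC$ one has $\delta(\infty,z)=\frac{1}{\sqrt{1+|z|^2}}$, so
\begin{equation*}
c=\int_\CC -\log\frac{1}{\sqrt{1+|z|^2}}\,\frac{d\ell(z)}{\pi(1+|z|^2)^2}
=\frac{1}{2\pi}\int_\CC \frac{\log(1+|z|^2)}{(1+|z|^2)^2}\,d\ell(z).
\end{equation*}
Passing to polar coordinates $z=re^{i\theta}$ and substituting $u=r^2$ reduces this to $\frac{1}{2}\int_0^\infty \frac{\log(1+u)}{(1+u)^2}\,du$, and a further substitution $t=1/(1+u)$ (or integration by parts) evaluates the integral to $1$, giving $c=\tfrac12$. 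Hence $I(\mu_\CC)=\tfrac12$, as claimed.

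The main obstacle is conceptual rather than computational: one must be slightly careful that the kernel $-\log\delta(x,y)$ genuinely falls under the hypotheses of the equilibrium-measure theory cited (in particular that $\PP^1(\CC)$ is ``large enough'' and that the logarithmic singularity of the kernel is handled correctly, so that a measure with constant, finite potential is forced to be \emph{the} minimizer). Granting the potential-theoretic framework of $\S$\ref{PotentialTheorySect}, the argument via transitive symmetry is clean and the only real work is the elementary integral above.
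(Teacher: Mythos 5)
Your proposal is correct and follows essentially the same route as the paper: show the measure is invariant under a transitive group of isometries of $(\PP^1(\CC),\delta)$, conclude the potential $U^{\mu_\CC}_\delta$ is constant, invoke the constant-potential criterion (Corollary~\ref{cor:constant-potential-is-equil}) to identify $\mu_\CC$ as the unique equilibrium measure, and evaluate the constant at $\infty$ via the same elementary integral. The only cosmetic difference is that you phrase the symmetry as $SO(3)$ acting on $S^2$ through stereographic projection, while the paper uses the equivalent action of $U(2,\CC)$ directly on $\PP^1(\CC)$.
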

We note that Theorem \ref{thm:C} appeared in a slightly different form in \cite[Proposition 5.2]{MR627747}.
\begin{thm}\label{thm:R}
When $E=\PP^1(\RR)$, the unique minimal energy measure $\mu_{\RR}$ on $\PP^1(\RR)$ is given explicitly by
\begin{equation}\label{RealMEMeas}
d\mu_{\RR}(z)=\frac{1}{\pi(1+x^2)}d\ell(x),
\end{equation}
where $\ell(x)$ is Lebesgue measure on $\RR=\PP^1(\RR)\setminus\{\infty\}$. The associated minimal energy is
\begin{equation*}
I(\mu_{\RR})=\log 2=0.69315....
\end{equation*}
\end{thm}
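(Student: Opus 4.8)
The plan is to identify $\mu_{\RR}$ as the equilibrium measure by the standard route: show $\mu_{\RR}$ is a probability measure, show its potential $U^{\mu_{\RR}}(x)=\int_{\PP^1(\RR)}-\log\delta(x,y)\,d\mu_{\RR}(y)$ is identically equal to $\log 2$ on $\PP^1(\RR)$, and then invoke the existence--uniqueness theory and Frostman-type characterization recalled in \S\ref{PotentialTheorySect} for the kernel $-\log\delta$ on the compact, non-polar set $\PP^1(\RR)$ (it is closed in $\PP^1(\CC)\cong S^2$ and contains a nondegenerate arc). A constant potential forces $\mu_{\RR}$ to be the minimizer $\mu_E$, and then $I(\mu_{\RR})=\int U^{\mu_{\RR}}\,d\mu_{\RR}=\log 2$; finiteness of the energy is automatic once the potential is seen to be bounded. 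That $\mu_{\RR}$ has mass one is the classical evaluation $\tfrac1\pi\int_{\RR}(1+x^2)^{-1}\,dx=1$.

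For the potential computation, restrict to $\RR=\PP^1(\RR)\setminus\{\infty\}$, where with $x=(x:1)$ one has $\delta(x,y)=|x-y|\,(1+x^2)^{-1/2}(1+y^2)^{-1/2}$, so the kernel splits additively,
\[
-\log\delta(x,y)=-\log|x-y|+\tfrac12\log(1+x^2)+\tfrac12\log(1+y^2).
\]
Integrating against $d\mu_{\RR}(y)$ reduces everything to the two integrals $A(x):=\tfrac1\pi\int_{\RR}\log|x-y|\,(1+y^2)^{-1}\,dy$ and $B:=\tfrac1\pi\int_{\RR}\log(1+y^2)\,(1+y^2)^{-1}\,dy$. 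The substitution $y=\tan\theta$ together with the classical value $\int_0^{\pi/2}\log\cos\theta\,d\theta=-\tfrac\pi2\log 2$ gives $B=2\log 2$. For $A(x)$ one recognizes $(1+y^2)^{-1}/\pi$ as the Poisson kernel of the upper half-plane at $i$: writing $\log|x-y|=\re\log(y-x)$ for the branch of the logarithm with cut going downward from $x$ and closing the contour in the upper half-plane --- the large arc contributes nothing since the integrand is $O(|y|^{-2}\log|y|)$, and $y=x$ is an integrable singularity --- the only pole enclosed is $y=i$, yielding $A(x)=\re\log(i-x)=\tfrac12\log(1+x^2)$. Substituting, the two copies of $\tfrac12\log(1+x^2)$ cancel and $U^{\mu_{\RR}}(x)=\tfrac12 B=\log 2$ for all $x\in\RR$; the remaining value $U^{\mu_{\RR}}(\infty)=\tfrac1\pi\int_{\RR}\bigl(-\log\delta(\infty,y)\bigr)(1+y^2)^{-1}\,dy=\tfrac12 B=\log 2$ is computed the same way, so $U^{\mu_{\RR}}\equiv\log 2$ on all of $\PP^1(\RR)$. (As a sanity check, $\log 2>\tfrac12=I(\mu_{\CC})$ of Theorem~\ref{thm:C}, consistent with passing to the smaller set.)

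The delicate points are (i) making the contour computation of $A(x)$ airtight --- equivalently, justifying that $A(x)$ equals the value at $i$ of the harmonic extension to the upper half-plane of $\log|x-\,\cdot\,|$, with care for the logarithmic growth at infinity --- and (ii) checking that the abstract potential theory of \S\ref{PotentialTheorySect} applies without modification to $\PP^1(\RR)$, in particular that the point at infinity causes no difficulty. Should one wish to avoid quoting Frostman's characterization, minimality and uniqueness can be obtained directly: for any probability measure $\nu$ on $\PP^1(\RR)$ with no atom at $\infty$ (which can be arranged by a rotation of $\PP^1(\RR)$, since such rotations preserve both $\delta$ and $\mu_{\RR}$) and with finite energy, the additive splitting of the kernel gives
\[
I(\nu)-I(\mu_{\RR})=\iint-\log|x-y|\,d(\nu-\mu_{\RR})(x)\,d(\nu-\mu_{\RR})(y)+2\int U^{\mu_{\RR}}\,d(\nu-\mu_{\RR}),
\]
where the second term vanishes because $U^{\mu_{\RR}}$ is constant and $\nu-\mu_{\RR}$ has total mass zero, while the first term is nonnegative by the energy principle for the logarithmic kernel and vanishes only when $\nu=\mu_{\RR}$; hence $I(\nu)\ge\log 2$ with equality iff $\nu=\mu_{\RR}$.
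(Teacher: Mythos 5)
Your proposal is correct and is essentially the paper's own (second) proof: you split the kernel as $-\log\delta(x,y)=-\log|x-y|+\tfrac12\log(1+x^2)+\tfrac12\log(1+y^2)$, establish the key identity $\tfrac1\pi\int_\RR\log|x-t|\,(1+t^2)^{-1}\,dt=\tfrac12\log(1+x^2)$ (the paper gets it from the Poisson formula and continuity of logarithmic potentials, you by residues --- the same computation in different clothing), and then conclude via the constant-potential criterion of Corollary~\ref{cor:constant-potential-is-equil} with $I(\mu_\RR)=\log 2$. The paper's first proof reaches the constancy of the potential more quickly from the transitive isometric action of $O(2,\RR)$ and the $O(2,\RR)$-invariance of $\mu_\RR$, with the value $\log 2$ then computed as $U_\delta^{\mu_\RR}(\infty)$, exactly as in your evaluation of $B$.
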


In order to state the result for more general subsets of the real line, we need the notion of harmonic measure, see \cite[Section 4.3]{MR1334766}. For a compact set $E\subset \RR$ of positive logarithmic capacity, let $\Omega=\PP^1(\CC)\setminus E$ be the complementary domain containing $\infty$ and let $g_{\Omega}(z,\infty)$ be the Green function of $\Omega$ with pole at $\infty.$ Given a point $z\in\Omega$, we denote the harmonic measure $\omega_{\Omega}(z,B)$ of a Borel set $B\subset\partial\Omega$ at $z$ with respect to $\Omega.$

\begin{thm}\label{thm:R-interval}
For any compact set $E\subset \RR$ of positive logarithmic capacity, the unique minimal energy measure is given by
\[
\mu_E = \omega_{\Omega}(i,\cdot) \quad \mbox{with} \quad I(\mu_E) = g_{\Omega}(i,\infty) + \frac{1}{2} \int \log(1+x^2)\,d\omega_{\Omega}(i,x).
\]
In particular, if $E=[-r,r]$ is an interval, then
\begin{align*}
d\mu_{[-r,r]}(x) &= \frac{(\sqrt{r^2+1}+1)\,dx}{\pi\sqrt{r^2-x^2}(x^2+(\sqrt{r^2+1}+1-\sqrt{r^2-x^2})^2)} \\ &+ \frac{(\sqrt{r^2+1}+1)\,dx}{\pi\sqrt{r^2-x^2}(x^2+(\sqrt{r^2+1}+1+\sqrt{r^2-x^2})^2)},\ x\in(-r,r).
\end{align*}
and
\[
I(\mu_{[-r,r]}) = \log\frac{2\sqrt{r^2+1}}{r}.
\]
\end{thm}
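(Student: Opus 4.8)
The plan is to reduce the energy minimization over measures on $E \subset \RR$ to a classical potential-theoretic problem on $\Omega = \PP^1(\CC) \setminus E$ by exploiting the explicit form of the chordal kernel $-\log\delta(x,y)$. For real points $x,y$ one has $\delta(x,y) = |x-y|/\sqrt{(1+x^2)(1+y^2)}$, so
\begin{equation*}
-\log\delta(x,y) = -\log|x-y| + \tfrac{1}{2}\log(1+x^2) + \tfrac{1}{2}\log(1+y^2).
\end{equation*}
Hence for any unit Borel measure $\nu$ supported on $E$,
\begin{equation*}
I(\nu) = \iint -\log|x-y|\,d\nu(x)\,d\nu(y) + \int \log(1+x^2)\,d\nu(x).
\end{equation*}
The first term is the classical logarithmic energy $I_{\log}(\nu)$, and the second is a linear functional given by integration against the fixed continuous weight $Q(x) = \log(1+x^2)$. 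Thus minimizing $I$ over probability measures on $E$ is exactly a weighted equilibrium problem in the sense of Saff--Totik (\cite{MR1334766}, Ch.~I), with external field $Q$. The first step is to record this reduction cleanly and invoke the existence and uniqueness of the weighted equilibrium measure $\mu_E$ (which also follows from the general result in \cite{MR0414898} cited in the paper).

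Next I would identify $\mu_E$ with the harmonic measure $\omega_\Omega(i,\cdot)$. The key observation is that $Q(x) = \log(1+x^2) = \log|x-i| + \log|x+i| = -g^{\CC}(x,i) - g^{\CC}(x,-i)$ up to the standard normalization, i.e.\ $Q$ is (twice) the restriction to $\RR$ of the negative of the logarithmic potential of the measure $\tfrac{1}{2}(\delta_i + \delta_{-i})$. So the external field is itself a logarithmic potential of a fixed measure $\tau = \tfrac{1}{2}(\delta_i + \delta_{-i})$ sitting at the two points $\pm i$. For external fields of this ``signed-measure'' type, the weighted equilibrium measure is known to be the balayage of $\tau$ onto $E$: $\mu_E = \widehat{\tau}^{\,E}$. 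By symmetry of $E \subset \RR$ and of $\tau$ under complex conjugation, and since balayage of a point mass $\delta_z$ onto a compact subset of $\RR$ from the domain $\Omega$ is precisely the harmonic measure $\omega_\Omega(z,\cdot)$, we get $\mu_E = \tfrac{1}{2}\omega_\Omega(i,\cdot) + \tfrac{1}{2}\omega_\Omega(-i,\cdot) = \omega_\Omega(i,\cdot)$. The value $I(\mu_E)$ is then computed from the characterization of the weighted equilibrium: $U^{\mu_E}(x) + Q(x) = F$ quasi-everywhere on $E$ (with $F$ the modified Robin constant), integrate against $\mu_E$, and use that $U^{\mu_E}(x) = U^{\widehat{\tau}^{\,E}}(x)$ differs from $U^{\tau}(x)$ by the Green function $g_\Omega(x,\cdot)$ integrated against $\tau$; tracking constants yields $I(\mu_E) = g_\Omega(i,\infty) + \tfrac{1}{2}\int\log(1+x^2)\,d\omega_\Omega(i,x)$, which is the stated formula. (One should double-check whether the harmonic measure formula needs the factor arising from $-\log|x-y|$ versus $\log|x-y|$ conventions; this is where sign bookkeeping matters.)

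For the interval $E = [-r,r]$ I would simply specialize. The Green function of $\Omega = \PP^1(\CC)\setminus[-r,r]$ with pole at $\infty$ is $g_\Omega(z,\infty) = \log\bigl|z/r + \sqrt{(z/r)^2 - 1}\bigr|$ with the branch chosen so that this is positive off $[-r,r]$; evaluating at $z = i$ gives $g_\Omega(i,\infty) = \log\bigl|i/r + \sqrt{-1/r^2 - 1}\bigr| = \log\frac{\sqrt{r^2+1}+1}{r}$ after simplification. The harmonic measure $\omega_\Omega(i,\cdot)$ on $(-r,r)$ is obtained from the explicit conformal map of $\Omega$ onto the exterior of the unit disk (a Joukowski-type map): $d\omega_\Omega(i,x) = \frac{1}{\pi}\,\frac{\partial g_\Omega}{\partial n}(x)\,dx$ along $[-r,r]$, and carrying out this normal-derivative computation produces the two-term density displayed in the theorem (the two terms correspond to the two sides of the slit, equivalently the $\pm\sqrt{r^2-x^2}$ branches). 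Finally, $\int \log(1+x^2)\,d\omega_\Omega(i,x) = U^\tau(i) - U^{\mu_{[-r,r]}}(i) + \text{const}$, but more directly one uses $I(\mu_E) = g_\Omega(i,\infty) + \tfrac{1}{2}\int\log(1+x^2)\,d\omega_\Omega(i,x)$ together with the mean-value/harmonicity identity $\int \log|x - i|\,d\omega_\Omega(i,x) = -g_\Omega(i,\infty) + \log(\text{cap}\,E) + \dots$; substituting and simplifying collapses everything to $I(\mu_{[-r,r]}) = \log\frac{2\sqrt{r^2+1}}{r}$. The main obstacle I anticipate is neither the identification $\mu_E = \omega_\Omega(i,\cdot)$ (which is standard balayage theory once the external field is recognized as a potential) nor the Green-function evaluation, but rather the final bookkeeping of additive constants: relating the modified Robin constant $F$, the logarithmic capacity of $E$, and the terms $g_\Omega(i,\infty)$ and $\int \log(1+x^2)\,d\omega$ so that the capacity-dependent pieces cancel and leave the clean closed form. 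Keeping careful track of the $\tfrac{1}{2}$'s coming from the chordal kernel and from $\tau = \tfrac{1}{2}(\delta_i + \delta_{-i})$ will be the delicate part.
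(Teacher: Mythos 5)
Your route is essentially the paper's: split the chordal kernel over $\RR$, recognize the external field as (minus) a logarithmic potential of point masses at $\pm i$, identify the minimizer as a balayage, i.e.\ the harmonic measure $\omega_\Omega(i,\cdot)$, and handle $E=[-r,r]$ via the conformal map of $\Omega$ onto the exterior of the unit disk. The paper works with $\delta_i$ alone (using $|x-i|^2=1+x^2$ for $x\in\RR$, so no symmetrization over $\pm i$ is needed) and gets the constancy of the elliptic potential q.e.\ on $E$ directly from the balayage identity $\int\log\frac{1}{|x-t|}\,d\omega_\Omega(i,t)+\tfrac12\log(1+x^2)=g_\Omega(i,\infty)$, then invokes its Corollary~\ref{cor:constant-potential-is-equil}; your symmetrized $\tau=\tfrac12(\delta_i+\delta_{-i})$ version is equivalent, and your normal-derivative computation of the density matches the paper's $(|\Phi_+'|+|\Phi_-'|)/(2\pi)$ pushforward. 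One small normalization point: in the Saff--Totik convention the external field here is $Q(x)=\tfrac12\log(1+x^2)=-U^\tau(x)$, not $\log(1+x^2)$; you effectively correct for this later, but it is exactly the kind of factor of two that must be pinned down.

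The one step that would fail as written is your closed-form evaluation of $I(\mu_{[-r,r]})$. The identity you propose, $\int\log|x-i|\,d\omega_\Omega(i,x)=-g_\Omega(i,\infty)+\log(\mathrm{cap}\,E)+\dots$, has the wrong constant: the logarithmic capacity of $E$ governs the Robin constant of $g_\Omega(\cdot,\infty)$ at $\infty$, whereas what enters here is the Robin-type constant of the Green function at the pole $i$. The correct tool (used in the paper, from Saff--Totik, Thm.~II.5.1) is $\int\log|z-t|\,d\omega_\Omega(i,t)=g_\Omega(z,i)+\log|z-i|-g_\Omega(i,\infty)$ for $z\in\Omega$; letting $z\to i$ gives $I(\mu_{[-r,r]})=g_\Omega(i,\infty)+\int\log|i-t|\,d\omega_\Omega(i,t)=\lim_{z\to i}\bigl(g_\Omega(z,i)+\log|z-i|\bigr)$, and this limit is computed from $g_\Omega(z,i)=\log|\Phi(z)|$ with your map $\Phi=\Phi_2\circ\Phi_1$, namely $\log\bigl((|w_0|^2-1)/|\Phi_1'(i)|\bigr)=\log\frac{2\sqrt{r^2+1}}{r}$. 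A sanity check shows the discrepancy in your version is not an absorbable constant: with $\mathrm{cap}[-r,r]=r/2$, your leading terms miss the true value by the $r$-dependent amount $\log\bigl(4\sqrt{r^2+1}/r^2\bigr)$. So replace the capacity bookkeeping by the two-point Green identity and the limit above, and the rest of your outline goes through.
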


Finally, we return to the Arakelov height and describe an application of the above potential-theoretic results to giving lower bounds $\har(\alpha)$ for points $\alpha$ satisfying splitting conditions.

\begin{thm}\label{TotallyvadicThm}
Let $S$ be a subset of the set $M_\QQ$ of all places of $\QQ$.  Let $L_S$ be the subfield of $\Qbar$ consisting of all algebraic numbers which are totally $v$-adic for all places $v\in S$.  If $\infty\notin S$ then 
\begin{equation}\label{TotallyvadicBound1}
\liminf_{\alpha\in\PP^1(L_S)}\har(\alpha) \geq\frac{1}{4}+\frac{1}{2}\sum_{p\in S}\frac{p\log p}{p^2-1}.
\end{equation}
If $\infty\in S$ then 
\begin{equation}\label{TotallyvadicBound2}
\liminf_{\alpha\in\PP^1(L_S)}\har(\alpha) \geq\frac{1}{2}\log 2+\frac{1}{2}\sum_{p\in S\setminus\{\infty\}}\frac{p\log p}{p^2-1}.
\end{equation}
\end{thm}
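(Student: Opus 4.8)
The plan is to combine the decomposition $\har(\alpha)=\tfrac12\sum_{v\in M_\QQ}D_v(\alpha)$ from $(\ref{ArakelovDefEnergySum})$ with an equidistribution argument that lets one pass from the discrete energy sums $D_v$ to the energy integrals $I_v$, after which the lower bounds of Theorems \ref{thm:C} and \ref{thm:R} (archimedean) and the Fili--Petsche bound $(\ref{FiliPetscheNonArch})$ (non-archimedean) can be inserted term by term. Concretely, assume the liminf in $(\ref{TotallyvadicBound1})$ or $(\ref{TotallyvadicBound2})$ is finite (otherwise there is nothing to prove) and choose a sequence of distinct points $\alpha_n\in\PP^1(L_S)$ with $\har(\alpha_n)$ tending to that liminf. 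Since $\delta_v\le 1$, the kernel $-\log\delta_v$ is nonnegative, so every $D_v(\alpha_n)\ge 0$, and hence for any finite set $T\subseteq S\cup\{\infty\}$ we have $\har(\alpha_n)\ge\tfrac12\sum_{v\in T}D_v(\alpha_n)$. Moreover, by the Northcott property the degrees $d_n=[\QQ(\alpha_n):\QQ]$ tend to infinity (a set of bounded height and bounded degree is finite), a fact used below.

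For each $v\in T$ let $\mu_{n,v}=\tfrac1{d_n}\sum_{i=1}^{d_n}\delta_{\alpha_{n,i}}$ be the probability measure equidistributed on the Galois conjugates of $\alpha_n$, viewed inside $\PP^1(\CC_v)$. When $v\in S$ these conjugates all lie in the compact set $\PP^1(\QQ_v)$ (in $\PP^1(\RR)$ when $v=\infty\in S$), while $\PP^1(\CC)$ is itself compact; in every case relevant to us, $\mu_{n,v}$ is supported on a fixed compact subset of $\PP^1(\CC_v)$. Passing to a subsequence, we may assume $\mu_{n,v}$ converges weakly to a probability measure $\mu_v$ for each of the finitely many $v\in T$; weak limits preserve total mass (by compactness) and the support constraint, so $\mu_v$ is supported on $\PP^1(\QQ_v)$ for $v\in S\cap T$, on $\PP^1(\RR)$ if $v=\infty\in S$, and on $\PP^1(\CC)$ otherwise.

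The technical heart of the argument is the semicontinuity estimate $\liminf_n D_v(\alpha_n)\ge I_v(\mu_v)$, and I expect this to be the main obstacle: the sum $D_v$ has the diagonal removed, whereas $\iint(-\log\delta_v)\,d\mu_{n,v}\,d\mu_{n,v}$ is identically $+\infty$, so one cannot simply pass to the limit. I would handle it by truncation: for $M>0$ the function $k_M=\min(-\log\delta_v,M)$ is continuous and bounded, so $\iint k_M\,d\mu_{n,v}\,d\mu_{n,v}\to\iint k_M\,d\mu_v\,d\mu_v$, while a direct count gives $D_v(\alpha_n)\ge\tfrac{d_n}{d_n-1}\bigl(\iint k_M\,d\mu_{n,v}\,d\mu_{n,v}-\tfrac{M}{d_n}\bigr)$, the error term accounting for the $d_n$ diagonal entries. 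Since $d_n\to\infty$, letting $n\to\infty$ and then $M\to\infty$ (monotone convergence) yields $\liminf_n D_v(\alpha_n)\ge\iint(-\log\delta_v)\,d\mu_v\,d\mu_v=I_v(\mu_v)$.

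Combining these ingredients, $\liminf_n\har(\alpha_n)\ge\tfrac12\sum_{v\in T}I_v(\mu_v)$. For a finite prime $p\in S\cap T$ we bound $I_p(\mu_p)\ge\tfrac{p\log p}{p^2-1}$ via $(\ref{FiliPetscheNonArch})$, since $\mu_p$ is a probability measure supported on $\PP^1(\QQ_p)$; for the archimedean place we use that $\mu_\infty$ is a probability measure supported on $\PP^1(\CC)$, so $I_\infty(\mu_\infty)\ge\tfrac12$ by Theorem \ref{thm:C} when $\infty\notin S$, and supported on $\PP^1(\RR)$, so $I_\infty(\mu_\infty)\ge\log 2$ by Theorem \ref{thm:R} when $\infty\in S$. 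Taking $T=\{\infty\}\cup\{p\in S:p\le N\}$ therefore gives, for every $N$, the bound $\liminf_{\alpha\in\PP^1(L_S)}\har(\alpha)\ge\tfrac14+\tfrac12\sum_{p\in S,\,p\le N}\tfrac{p\log p}{p^2-1}$ when $\infty\notin S$, and the analogous bound with $\tfrac14$ replaced by $\tfrac12\log 2$ when $\infty\in S$; letting $N\to\infty$ yields $(\ref{TotallyvadicBound1})$ and $(\ref{TotallyvadicBound2})$.
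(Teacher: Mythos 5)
Your proposal is correct and follows essentially the same route as the paper: decompose $\har$ into the local energy sums $D_v$, drop the places outside $S\cup\{\infty\}$ by nonnegativity, pass to weak limits of the conjugate measures by compactness, bound $\liminf_n D_v(\alpha_n)\geq I_v(\mu_v)$, and insert the minimal-energy values from Theorems~\ref{thm:C}, \ref{thm:R} and the bound $(\ref{FiliPetscheNonArch})$. The only differences are cosmetic: you prove the discrete-to-integral semicontinuity step directly by truncation (with Northcott ensuring $d_n\to\infty$) where the paper cites \cite[Lemma 7.54]{MR2599526}, and you handle a possibly infinite $S$ by exhausting it with finite sets $T$ rather than interchanging $\liminf$ with the infinite sum of nonnegative terms.
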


Naturally, the lower bound of Theorem~\ref{TotallyvadicThm} is only interesting when it is greater than the elementary and unconditional lower bound $\frac{1}{2}\log2$ of Theorem~\ref{ElementaryLBThm}, and this depends on the set $S$.  For example, when $S$ is empty the lower bound of Theorem~\ref{TotallyvadicThm} is $\frac{1}{4}$, which is worse than $\frac{1}{2}\log2$.  When $S=\{v\}$, a single place, the lower bound of Theorem~\ref{TotallyvadicThm} beats $\frac{1}{2}\log2$ when $v=2, 3, 5, 7, 11, 13$ but not when $v=\infty,17, 19,\dots$.  When $S=\{\infty,p\}$ for a prime $p$, the lower bound of Theorem~\ref{TotallyvadicThm} always beats $\frac{1}{2}\log2$.  When $S=\{p, q\}$ for distinct primes $p<q$, the lower bound of Theorem~\ref{TotallyvadicThm} always beats $\frac{1}{2}\log2$ when $p=2, 3, 5, 7, 11, 13$, and there are exactly 82 pairs of distinct primes $S=\{p,q\}$, with $13<p<q$, for which the lower bound of Theorem~\ref{TotallyvadicThm} beats $\frac{1}{2}\log2$.

The following result should be viewed as a refinement of $(\ref{TotallyvadicBound2})$ in the totally real case, in which all of the $\Gal(\Qbar/\QQ)$-conjugates are contained in a symmetric real interval.

\begin{thm}\label{TotallyvadicIntervalThm}
Let $S$ be a subset of the set $M_\QQ$ of all places of $\QQ$, and assume that $\infty\in S$.  For $r>0$, let $L_{S,r}$ be the subset of $\Qbar$ consisting of all algebraic numbers which are totally $p$-adic for all primes $p\in S$, and totally real with all $\Gal(\Qbar/\QQ)$-conjugates in $\PP^1(\CC)$ lying in the interval $[-r,r]$.  Then 
\begin{equation}\label{TotallyvadicBound3}
\liminf_{\alpha\in\PP^1(L_{S,r})}\har(\alpha) \geq \frac{1}{2}\log\frac{2\sqrt{r^2+1}}{r}+\frac{1}{2}\sum_{p\in S}\frac{p\log p}{p^2-1}.
\end{equation}
\end{thm}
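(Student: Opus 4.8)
The plan is to deduce Theorem~\ref{TotallyvadicIntervalThm} from the local energy minimizations already in hand --- inequality \eqref{FiliPetscheNonArch} at each prime $p\in S$ and Theorem~\ref{thm:R-interval} at the archimedean place --- by the standard weak-$*$ equidistribution argument; this is exactly the argument for Theorem~\ref{TotallyvadicThm}, with the sharper archimedean value $I(\mu_{[-r,r]})=\tfrac12\log\tfrac{2\sqrt{r^2+1}}{r}\cdot 2=\log\tfrac{2\sqrt{r^2+1}}{r}$ of Theorem~\ref{thm:R-interval} used in place of $I(\mu_{\RR})=\log 2$. First I would set $\ell=\liminf_{\alpha\in\PP^1(L_{S,r})}\har(\alpha)$, choose distinct $\alpha_n\in\PP^1(L_{S,r})$ with $\har(\alpha_n)\to\ell$, and use the Northcott property of $\har$ (valid since $0\le\har-\hweil\le\tfrac12\log 2$, both heights being coordinate-independent and the two archimedean norms comparable) to conclude $d_n:=[\QQ(\alpha_n):\QQ]\to\infty$; in particular $d_n\ge2$, so the decomposition \eqref{ArakelovDefEnergySum}--\eqref{LocalEnergySum} applies. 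Write $\{\alpha_{n,1},\dots,\alpha_{n,d_n}\}$ for the $\Gal(\Qbar/\QQ)$-conjugates of $\alpha_n$ and $\mu_{n,v}=\frac1{d_n}\sum_i\delta_{\alpha_{n,i}}$ for the empirical probability measure on $\PP^1(\CC_v)$.

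Next I would exploit the splitting hypotheses: since $\alpha_n$ is totally $p$-adic for every prime $p\in S$, the measure $\mu_{n,p}$ is supported on the \emph{compact} set $\PP^1(\QQ_p)$, and since $\alpha_n$ is totally real with conjugates in $[-r,r]$, the measure $\mu_{n,\infty}$ is supported on the compact set $[-r,r]\subset\PP^1(\CC)$. Fixing a finite set $S'$ of primes in $S$ and diagonalizing over $S'\cup\{\infty\}$, I may pass to a subsequence along which $\mu_{n,p}\to\mu_p$ and $\mu_{n,\infty}\to\mu_\infty$ weak-$*$, where $\mu_p$ is a probability measure on $\PP^1(\QQ_p)$ and $\mu_\infty$ a probability measure on $[-r,r]$ --- compactness is exactly what keeps the limits on these sets, and is also why no passage to the Berkovich line is needed. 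To pass from sums to integrals I would truncate: for $v\in S'\cup\{\infty\}$ and $M>0$ put $k_{v,M}=\min(-\log\delta_v,M)$, a bounded continuous kernel on $\PP^1(\CC_v)^2$ with $k_{v,M}(x,x)=M$ (as $\delta_v(x,x)=0$). Then $\sum_{i\neq j}k_{v,M}(\alpha_{n,i},\alpha_{n,j})=d_n^2\iint k_{v,M}\,d\mu_{n,v}\,d\mu_{n,v}-d_nM$, and since $-\log\delta_v\ge k_{v,M}$ pointwise,
\[
D_v(\alpha_n)\ \ge\ \frac{d_n}{d_n-1}\iint k_{v,M}\,d\mu_{n,v}\,d\mu_{n,v}-\frac{M}{d_n-1}.
\]
Letting $n\to\infty$ along the subsequence (bounded continuous integrand, $\mu_{n,v}\times\mu_{n,v}\to\mu_v\times\mu_v$ weak-$*$) and then $M\to\infty$ (monotone convergence) gives $\liminf_n D_v(\alpha_n)\ge I_v(\mu_v)$.

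Finally I would drop the nonnegative terms $D_v(\alpha_n)$ for $v\notin S'\cup\{\infty\}$ in \eqref{ArakelovDefEnergySum}, use superadditivity of $\liminf$ over the finite index set, and insert $I_p(\mu_p)\ge\frac{p\log p}{p^2-1}$ from \eqref{FiliPetscheNonArch} together with $I(\mu_\infty)\ge I(\mu_{[-r,r]})=\log\frac{2\sqrt{r^2+1}}{r}$ from Theorem~\ref{thm:R-interval} (applicable since $[-r,r]$ has positive logarithmic capacity), obtaining
\begin{align*}
\ell=\lim_n\har(\alpha_n)&\ \ge\ \tfrac12\liminf_n D_\infty(\alpha_n)+\tfrac12\sum_{p\in S'}\liminf_n D_p(\alpha_n)\\
&\ \ge\ \tfrac12\log\frac{2\sqrt{r^2+1}}{r}+\tfrac12\sum_{p\in S'}\frac{p\log p}{p^2-1}.
\end{align*}
Taking the supremum over all finite sets $S'$ of primes contained in $S$ yields \eqref{TotallyvadicBound3}; when $S$ contains infinitely many primes the right-hand side is $+\infty$ (as $\sum_p\frac{\log p}{p}$ diverges) and the statement simply asserts $\ell=+\infty$.

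I do not expect a serious obstacle: the mathematical content is carried entirely by Theorem~\ref{thm:R-interval} and \eqref{FiliPetscheNonArch}, and what remains is the routine transfer above. The only points that need a little care are verifying that the weak-$*$ limits are genuinely supported on $[-r,r]$ and $\PP^1(\QQ_p)$ (which is precisely the compactness of those sets, and also what lets us sidestep the Berkovich subtlety at $p$), controlling the harmless diagonal contribution via the truncation $k_{v,M}$, and handling an infinite $S$ by exhaustion through finite subsets $S'$.
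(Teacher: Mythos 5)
Your proposal is correct and follows essentially the same route as the paper: pass to weak-$*$ limits of the conjugate (empirical) measures at the places in $S$, drop the nonnegative local energies outside $S$, apply superadditivity of $\liminf$, and insert the bound \eqref{FiliPetscheNonArch} at each prime together with $I(\mu_{[-r,r]})=\log\frac{2\sqrt{r^2+1}}{r}$ from Theorem~\ref{thm:R-interval}. The only difference is cosmetic: where the paper cites \cite[Lemma 7.54]{MR2599526} for the step $\liminf_n D_v(\alpha_n)\ge I_v(\mu_v)$, you prove it directly via the truncated kernel $\min(-\log\delta_v,M)$ together with the Northcott argument forcing $d_n\to\infty$, which is a perfectly adequate self-contained substitute.
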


We give some examples to illustrate the applications of our results.
\begin{example}
 Suppose that $S=\{2,\infty\}$, so that $L_S$ is the field of all algebraic numbers which are totally $2$-adic and totally real. It then follows from Theorem \ref{TotallyvadicThm} that
 \begin{align*}
  \liminf_{\alpha\in\PP^1(L_{S})}\har(\alpha) &\geq \frac12 \log 2 + \frac{1}{2}\cdot\frac{2\log 2}{2^2-1}\\ &= 0.346574\ldots + 0.231049\ldots = 0.577623\ldots
 \end{align*}
\end{example}
\begin{example}
 Now suppose we impose the additional restriction to the previous example that all conjugates in $\bC$ also lie in the interval $[-2,2]$, that is, we take $S=\{2,\infty\}$ and $r=2$, so that $L_{S,r}$ consists of all algebraic numbers which are totally $2$-adic and have all conjugates lying in $[-2,2]$ in $\bC$. It then follows from Theorem \ref{TotallyvadicIntervalThm} that
 \begin{align*}
  \liminf_{\alpha\in\PP^1(L_{S,r})}\har(\alpha) &\geq \frac12 \log \frac{2\sqrt{2^2 + 1}}{2} + \frac{1}{2}\cdot\frac{2\log 2}{2^2-1}\\
  &= 0.402359\ldots + 0.231049\ldots = 0.633409\ldots
 \end{align*}
\end{example}
\begin{example}
 Suppose that $S=\{\infty\}$ and $r=2$, so that we consider all algebraic numbers which are totally real with conjugates lying in the interval $[-2,2]$. Then Theorem \ref{TotallyvadicIntervalThm} implies that 
 \[
  \liminf_{\alpha\in\PP^1(L_{S,r})}\har(\alpha) \geq \frac12 \log \frac{2\sqrt{2^2 + 1}}{2} = 0.402359\ldots
 \]
 We note that if we imposed the additional restriction that our numbers were algebraic integers (which is not an assumption of our results above) then we have by well-known equidistribution results that the conjugates of these numbers will equidistribute in the interval $[-2,2]$ according to its logarithmic equilibrium measure $d\nu(x) = dx/\pi \sqrt{4-x^2}$, and therefore the Arakelov height will in fact limit to
 \[
  \int_{-2}^2 \frac{\log \sqrt{1+x^2}}{\pi \sqrt{4-x^2}}dx = 0.481212\ldots
 \]
 and this limit is achieved for these numbers (which are preperiodic points for the Chebyshev map $T_2(x)=x^2-2$). It is an interesting open question to determine when the lower bounds for the Arakelov height in Theorems \ref{TotallyvadicThm} and \ref{TotallyvadicIntervalThm} are achieved as limits of heights for sequences of algebraic numbers.
\end{example}

\section{An elementary lower bound on the Arakelov height}\label{ElementarySect}

In this section we prove Theorem~\ref{ElementaryLBThm}.  The proof is an application of this elementary lemma.

\begin{lemma}\label{ElementaryLemma}
Let $d\geq1$ and $r\geq0$ be integers, and let $a_1,\dots,a_d$ and $b_1,\dots,b_r$ be positive real numbers with $a_1\dots a_db_1 \dots b_r=1$.  Then the quantity
$$
L=L(a_1,\dots,a_d,b_1,\dots,b_r)=\frac{1}{d}\bigg(\sum_{j=1}^d\frac{1}{2}\log(1+a_j^2)+\sum_{k=1}^r\log^+b_k\bigg)
$$
satisfies the lower bound $L\geq\frac{1}{2}\log2$. Equality $L=\frac{1}{2}\log2$ holds if and only if $a_1=\dots=a_d=b_1=\dots=b_r=1$.
\end{lemma}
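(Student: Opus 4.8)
The plan is to reduce the inequality to a statement about a single variable by exploiting the product constraint. First I would dispense with the $b_k$ terms: since $\log^+ b_k \geq 0$ always, and since $\log^+ b_k \geq \log b_k$, we have $\sum_k \log^+ b_k \geq \log(b_1\cdots b_r) = -\log(a_1\cdots a_d)$. Thus it suffices to show
\[
\frac{1}{d}\sum_{j=1}^d \frac{1}{2}\log(1+a_j^2) \;\geq\; \frac{1}{2}\log 2 + \frac{1}{d}\log(a_1\cdots a_d),
\]
i.e. that $\frac{1}{d}\sum_j \left(\frac{1}{2}\log(1+a_j^2) - \log a_j\right) \geq \frac{1}{2}\log 2$. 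So the whole problem collapses to the pointwise claim that the function $\phi(a) = \frac{1}{2}\log(1+a^2) - \log a = \frac{1}{2}\log\!\left(\frac{1+a^2}{a^2}\right)$ satisfies $\phi(a) \geq \frac{1}{2}\log 2$ for all $a > 0$, with equality iff $a = 1$.

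The pointwise claim is elementary: $\phi(a) \geq \frac{1}{2}\log 2$ is equivalent to $\frac{1+a^2}{a^2} \geq 2$, i.e. $1 + a^2 \geq 2a^2$, i.e. $a^2 \leq 1$. Wait --- that is false for $a > 1$, so I need to be more careful. Let me reconsider: the correct move is to \emph{not} throw away the positivity of $\log^+ b_k$ so crudely, but rather to handle the two regimes of each $a_j$ separately. Actually the cleaner approach: I would use that $\frac{1}{2}\log(1+a^2) \geq \log^+ a + \frac{1}{2}\log 2$ does not hold either. The genuinely correct reduction is via the AM–GM-type bound $\frac{1}{2}\log(1+a^2) \geq \frac{1}{2}\log(2a) = \frac{1}{2}\log 2 + \frac{1}{2}\log a$, which follows from $1 + a^2 \geq 2a$ (AM–GM), with equality iff $a = 1$. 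Therefore
\[
L \;\geq\; \frac{1}{d}\left(\sum_{j=1}^d \left(\tfrac{1}{2}\log 2 + \tfrac{1}{2}\log a_j\right) + \sum_{k=1}^r \log^+ b_k\right) \;\geq\; \frac{1}{2}\log 2 + \frac{1}{2d}\log(a_1\cdots a_d) + \frac{1}{d}\sum_k \log^+ b_k.
\]
Since $\log^+ b_k \geq \log b_k$, the last two terms sum to at least $\frac{1}{2d}\log(a_1\cdots a_d) + \frac{1}{d}\log(b_1\cdots b_r)$. Hmm, the coefficient mismatch ($\frac{1}{2}$ versus $1$) is the remaining obstacle, and it shows I should instead keep the full strength $\frac{1}{2}\log(1+a_j^2) \geq \frac{1}{2}\log 2 + \log^+ a_j$ when $a_j \geq 1$ and use the bound differently when $a_j < 1$.

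The cleanest path, which I would actually write up, is: for each $j$ with $a_j \geq 1$, use $\frac{1}{2}\log(1+a_j^2) \geq \frac{1}{2}\log 2 + \log a_j$ (since $1+a_j^2 \geq 2a_j^2$ when $a_j^2 \geq 1$), and for each $j$ with $a_j < 1$ (and for each $b_k < 1$), simply use $\frac{1}{2}\log(1+a_j^2) \geq \frac{1}{2}\log 2 + 0$ won't work since $1 + a_j^2 < 2$. Instead use $\frac{1}{2}\log(1+a_j^2) \geq \frac{1}{2}\log 2 + \frac{1}{2}\log a_j^2 = \frac{1}{2}\log 2 + \log a_j$ requires $1 + a_j^2 \geq 2a_j^2$, true for $a_j \leq 1$ as well! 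Indeed $1 + a^2 \geq 2a^2 \iff a^2 \leq 1$. So for $a_j \leq 1$ we get $\frac{1}{2}\log(1+a_j^2) \geq \frac{1}{2}\log 2 + \log a_j$, and for $a_j \geq 1$ we get $\frac{1}{2}\log(1+a_j^2) \geq \frac{1}{2}\log 2$ (from $1 + a_j^2 \geq 2 $) $\geq \frac{1}{2}\log 2 + 0 \geq$ but we want a $\log a_j$ term with the wrong sign available for free since $\log a_j \geq 0$. Putting it together: in all cases $\frac{1}{2}\log(1+a_j^2) \geq \frac{1}{2}\log 2 + \min(\log a_j, \log a_j)$... the uniform statement is $\frac{1}{2}\log(1+a^2) \geq \frac{1}{2}\log 2 + \log^- a$ where $\log^- a = \min(\log a, 0)$, combined with $\frac{1}{2}\log(1+a^2) \geq \frac{1}{2}\log 2 + \log^+ a$ when... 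Summing the first over all $j$, $L \geq \frac{1}{2}\log 2 + \frac{1}{d}(\sum_j \log^- a_j + \sum_k \log^+ b_k)$, and then I bound $\sum_k \log^+ b_k \geq -\sum_j \log a_j - \sum_{k} \log^- b_k \geq -\sum_j \log a_j$ hmm. The main obstacle is precisely this bookkeeping: I expect to need the two-sided estimate $\frac{1}{2}\log(1+a_j^2) \geq \frac{1}{2}\log 2 + \log^+ a_j$ (valid for $a_j \geq 1$, where it is equivalent to $1+a_j^2 \geq 2a_j^2$) and to pair the $\log^+ b_k$ terms against the $\log a_j$ for those $j$ with $a_j \leq 1$ via the constraint, tracking equality throughout to conclude all variables equal $1$. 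I would organize the write-up around the single inequality $\frac{1}{2}\log(1+t^2) - \log^+ t \geq \frac{1}{2}\log 2$ for all $t > 0$ (checking the two cases $t \leq 1$ and $t \geq 1$ separately, each reducing to $1 + t^2 \geq 2\min(1,t^2)\cdot(\text{something})$), then summing and using $\sum \log^+ a_j + \sum\log^+ b_k \geq \sum\log a_j + \sum\log b_k = 0$.
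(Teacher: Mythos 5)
Your proposal does not close. The write-up you say you would give is organized around the inequality $\tfrac{1}{2}\log(1+t^2)-\log^+ t\geq\tfrac{1}{2}\log 2$ for all $t>0$, but this is false for every $t>1$: there it is equivalent to $1+t^2\geq 2t^2$, and at $t=2$ the left side is $\tfrac{1}{2}\log 5-\log 2\approx 0.11$ while the right side is $\approx 0.35$ (the left side even tends to $0$ as $t\to\infty$); the same sign error appears earlier when you assert $1+a_j^2\geq 2a_j^2$ for $a_j\geq 1$. The bound you do establish correctly, $\tfrac{1}{2}\log(1+a^2)\geq\tfrac{1}{2}\log 2+\min(\log a,0)$, is too weak: summing it gives $L\geq\tfrac{1}{2}\log 2+\tfrac{1}{d}\bigl(\sum_j\min(\log a_j,0)+\sum_k\log^+ b_k\bigr)$, and that correction term can be strictly negative under the product constraint (take $d=2$, $r=0$, $a_1=1/2$, $a_2=2$). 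So none of the routes sketched in the proposal proves the inequality, and the equality characterization is never carried out beyond an intention to ``track equality.''

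The fix is small and sits inside your second attempt, which you abandoned one step too early. Keep the AM--GM step $\tfrac{1}{2}\log(1+a_j^2)\geq\tfrac{1}{2}\log(2a_j)=\tfrac{1}{2}\log 2+\tfrac{1}{2}\log a_j$, but pair it with $\log^+ b_k\geq\tfrac{1}{2}\log b_k$ (valid for all $b_k>0$) rather than $\log^+ b_k\geq\log b_k$; then the coefficients match and
\[
dL\;\geq\;\frac{d}{2}\log 2+\frac{1}{2}\Bigl(\sum_{j=1}^d\log a_j+\sum_{k=1}^r\log b_k\Bigr)\;=\;\frac{d}{2}\log 2
\]
by the constraint $a_1\cdots a_d b_1\cdots b_r=1$. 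Equality then forces equality in each AM--GM, so every $a_j=1$, and equality in $\log^+ b_k\geq\tfrac{1}{2}\log b_k$, so every $b_k=1$. This is essentially the paper's argument in integrated form: the paper sets $f(t)=L(a_1^t,\dots,a_d^t,b_1^t,\dots,b_r^t)$ and proves $f'(t)\geq 0$ using exactly the estimates $\frac{a^{2t}}{1+a^{2t}}\log a\geq\frac{1}{2}\log a$ and $\log^+ b\geq\frac{1}{2}\log b$, concluding $L=f(1)\geq f(0)=\frac{1}{2}\log 2$, with the equality case handled via $f''\equiv 0$. The essential point your bookkeeping missed is that the correct coefficient on both the archimedean and nonarchimedean error terms is $\tfrac{1}{2}$, which is what lets the product formula cancel them exactly.
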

\begin{proof}
We are going to use the inequality 
\begin{equation}\label{AuxIneq}
\frac{a^{2t}}{1+a^{2t}}\log a\geq\frac{1}{2}\log a
\end{equation}
which holds for all $0\leq t\leq 1$ and $a>0$.  To check $(\ref{AuxIneq})$ one uses the bounds $\frac{x}{1+x}\leq \frac{1}{2}$ (for $0\leq x\leq 1$) and $\frac{x}{1+x}\geq \frac{1}{2}$ (for $x\geq 1$).

Now define $f(t)=L(a_1^t,\dots,a_d^t,b_1^t,\dots,b_r^t)$ for $0\leq t\leq 1$.  Then using $(\ref{AuxIneq})$ and the trivial inequalities $\log^+b\geq\frac{1}{2}\log^+b\geq\frac{1}{2}\log b$ for $b>0$, we have
\begin{equation*}
\begin{split}
f'(t) & = \frac{1}{d}\bigg(\sum_{j=1}^d\frac{a_j^{2t}}{1+a_j^{2t}}\log a_j +\sum_{k=1}^r\log^+b_k\bigg) \\
	& \geq \frac{1}{2d}\bigg(\sum_{j=1}^d\log a_j +\sum_{k=1}^r\log b_k\bigg)=0.
\end{split}
\end{equation*}
It follows that $f(t)$ is nondecreasing for $0\leq t\leq 1$ and therefore 
\begin{equation}\label{DesiredLB}
L=f(1)\geq f(0)=\frac{1}{2}\log2.
\end{equation}

Now suppose that $L=\frac{1}{2}\log 2$.  Then $(\ref{DesiredLB})$ and the fact that $f(t)$ is nondecreasing implies that $f(t)$ is constant.  Therefore
\begin{equation*}
\begin{split}
f''(t) & = \frac{1}{d}\sum_{j=1}^d\frac{a_j^{2t}}{(1+a_j^{2t})^2}2(\log a_j)^2  =0
\end{split}
\end{equation*}
for all $0\leq t\leq 1$, which can occur only if $\log a_j=0$ for all $1\leq j\leq d$.  Finally, from the definition of $L$, the assumption that $L=\frac{1}{2}\log 2$, and the fact that $a_1=\dots=a_d=1$, we deduce $\sum_{k=1}^r\log^+b_k=0$.  It follows that $b_k\leq 1$ for all $1\leq k\leq r$.  But since $b_1\dots b_r=1$ we conclude $b_1=\dots =b_r=1$.
\end{proof}

\begin{proof}[Proof of Theorem~\ref{ElementaryLBThm}]
For $\alpha\in\Qbar^\times$, set $K=\QQ(\alpha)$ and $d=[K:\QQ]$.  Viewing $\alpha$ as the point $(\alpha:1)$ in  $\PP^1(K)$, the definition $(\ref{ArakelovDef})$ simplifies to 
\begin{equation*}
\har(\alpha) = \frac{1}{d}\bigg( \sum_{\sigma:K\hookrightarrow\CC}\frac{1}{2}\log (1+|\sigma(\alpha)|^2) + \sum_{p}\sum_{\sigma:K\hookrightarrow\CC_p}\log^+|\sigma(\alpha)|_p \bigg).
\end{equation*}
The lower bound $\har(\alpha)\geq\frac{1}{2}\log 2$ now follows immediately from Lemma~\ref{ElementaryLemma}, taking the $a_j$ to be the numbers $|\sigma(\alpha)|$ as $\sigma$ ranges over all complex embeddings of $K$, and taking the $b_k$ to be the numbers $|\sigma(\alpha)|_p$ as $p$ ranges over a sufficiently large finite set of rational primes, and for each $p$, $\sigma$ ranges over all embeddings of $K$ into $\CC_p$.  The product formula $(\ref{ProdForm})$ ensures that the hypothesis $a_1\dots a_db_1 \dots b_r=1$ of Lemma \ref{ElementaryLemma} is satisfied.  If equality $\har(\alpha)=\frac{1}{2}\log 2$ holds, then Lemma \ref{ElementaryLemma} says that $|\sigma(\alpha)|_v=1$ for all places $v\in M_\QQ$ and all embeddings $\sigma:K\hookrightarrow\CC_v$; by Kronecker's theorem this occurs only when $\alpha$ is a root of unity.
\end{proof}

\section{Minimal energy calculations}\label{PotentialTheorySect}
First, let us set up the energy problem. For a Borel probability measure $\mu$ on $\bP^1(\bC)$, let
\[
 I(\mu) = \iint_{\bP^1(\bC)^2} -\log \delta(x,y)\,d\mu(x)\,d\mu(y).
\]
We call this integral the elliptic energy of $\mu$. Tsuji established some of the fundamental potential-theoretic results for the above kernel $\delta(x,y)$ (cf. \cite{MR0027861}). It follows from \S III.11 in \cite{MR0414898} and \cite[Theorem III.8]{MR0414898} that, for any closed subset $E\subset \bP^1(\bC)$, if the elliptic Robin constant $V_\delta(E)$ given by
\[
 V_\delta(E) = \inf_{\substack{\mu\\ \supp(\mu)\subseteq E}} I_\delta(\mu)
\]
is finite, then there exists a unique measure $\mu=\mu_E$ such that $I(\mu) = V_\delta(E)$.
\begin{thm}
 Given $\nu$ a Borel probability measure supported on the closed set $E\subseteq \bP^1(\bC)$,
 \[
  \inf_{x\in E} U^\nu_\delta(x) \leq V_\delta(E) \leq \sup_{x\in E} U^\nu_\delta(x)
 \]
 where $U^\nu_\delta(x)$ is the elliptic potential function
 \[
  U^\nu_\delta(x) = \int_E -\log \delta(x,y)\,d\nu(y).
 \]
\end{thm}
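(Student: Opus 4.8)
The upper bound $V_\delta(E)\le\sup_{x\in E}U^\nu_\delta(x)$ is immediate and I would dispose of it first. Since $\nu$ is a Borel probability measure with $\supp(\nu)\subseteq E$, it competes in the infimum defining $V_\delta(E)$, so $V_\delta(E)\le I(\nu)$. The kernel $-\log\delta(x,y)$ is nonnegative because $\delta\le 1$, so Tonelli's theorem gives $I(\nu)=\int_E U^\nu_\delta(x)\,d\nu(x)\le\sup_{x\in E}U^\nu_\delta(x)$, and we are done (trivially so if the supremum is $+\infty$).

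For the lower bound $\inf_{x\in E}U^\nu_\delta(x)\le V_\delta(E)$ I would first note that the case $V_\delta(E)=+\infty$ is trivial, so assume $V_\delta(E)<\infty$ and let $\mu_E$ be the equilibrium measure, a probability measure with $\supp(\mu_E)\subseteq E$. The plan is to test $U^\nu_\delta$ against $\mu_E$ and interchange the order of integration: since $\mu_E$ is a probability measure supported in $E$,
\[
\inf_{x\in E}U^\nu_\delta(x)\;\le\;\int_E U^\nu_\delta(x)\,d\mu_E(x)\;=\;\int_E U^{\mu_E}_\delta(y)\,d\nu(y),
\]
the equality again by Tonelli. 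It then suffices to bound the right-hand side by $V_\delta(E)$, and for this I would invoke Frostman's maximum principle for the elliptic kernel: $U^{\mu_E}_\delta(x)\le V_\delta(E)$ for \emph{every} $x\in\bP^1(\bC)$. Granting this, $\int_E U^{\mu_E}_\delta(y)\,d\nu(y)\le V_\delta(E)\,\nu(E)=V_\delta(E)$, which finishes the proof.

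The maximum principle is the step I expect to be the main obstacle. One option is simply to cite it from the potential-theoretic references already in play (Tsuji \cite{MR0027861}, \cite{MR0414898}), where the analogue of the classical Frostman/Maria maximum principle for kernels of this type is established. Alternatively there is a short self-contained argument: in an affine chart $-\log\delta(x,y)=-\log|x-y|+\tfrac12\log(1+|x|^2)+\tfrac12\log(1+|y|^2)$, so the distributional Laplacian of $x\mapsto U^{\mu_E}_\delta(x)$ equals $2\pi(\mu_{\CC}-\mu_E)$ with $\mu_{\CC}$ the measure of Theorem~\ref{thm:C}; hence $U^{\mu_E}_\delta$ is subharmonic on $\bP^1(\bC)\setminus\supp(\mu_E)$. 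The equilibrium characterization gives $U^{\mu_E}_\delta\ge V_\delta(E)$ quasi-everywhere on $E$, and together with $\int U^{\mu_E}_\delta\,d\mu_E=V_\delta(E)$ this forces $U^{\mu_E}_\delta=V_\delta(E)$ $\mu_E$-almost everywhere; since the potential is lower semicontinuous this yields $U^{\mu_E}_\delta\le V_\delta(E)$ throughout $\supp(\mu_E)$, and the maximum principle for subharmonic functions (Maria's theorem) then propagates the bound to all of $\bP^1(\bC)$.

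Two remarks. First, the \emph{pointwise} form of the maximum principle is indispensable here: the measure $\nu$ is arbitrary and may charge sets of capacity zero, so a bound on $U^{\mu_E}_\delta$ valid only quasi-everywhere would not control $\int U^{\mu_E}_\delta\,d\nu$; this asymmetry in the available tools is exactly what makes the lower bound harder than the upper bound. Second, the practical value of the theorem lies in the following consequence: if a candidate probability measure $\nu$ on $E$ has $U^\nu_\delta$ equal to a constant $c$ on all of $E$, then $\inf_E U^\nu_\delta=\sup_E U^\nu_\delta=c$ forces $V_\delta(E)=c$, and uniqueness of the equilibrium measure forces $\nu=\mu_E$; this is the route by which one would establish Theorems~\ref{thm:C}, \ref{thm:R}, and \ref{thm:R-interval}.
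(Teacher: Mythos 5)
Your proof is correct, and it splits from the paper in an interesting way. For the harder direction, $\inf_{x\in E}U^\nu_\delta(x)\le V_\delta(E)$, you do exactly what the paper does: integrate $U^\nu_\delta$ against the equilibrium measure $\mu_E$, swap the order of integration by Tonelli (legitimate since the kernel is nonnegative), and invoke the pointwise maximum principle $U^{\mu_E}_\delta\le V_\delta(E)$ everywhere, which the paper takes from Tsuji (Theorem III.46); your remark that the pointwise (not merely quasi-everywhere) form is indispensable because $\nu$ may charge polar sets is exactly the right observation. For the other direction, however, your argument is genuinely simpler than the paper's: you just note that $\nu$ itself competes in the infimum defining $V_\delta(E)$, so $V_\delta(E)\le I(\nu)=\int_E U^\nu_\delta\,d\nu\le\sup_{x\in E}U^\nu_\delta(x)$, with all cases of infinite values handled automatically. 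The paper instead proves $V_\delta(E)\le\sup_E U^\nu_\delta$ by first reducing to $\sup_E U^\nu_\delta<\infty$, adapting Tsuji's Theorem III.7 to show $\nu$ charges no polar set, using the Frostman identity $U^{\mu_E}_\delta=V_\delta(E)$ quasi-everywhere on $E$ to get $\int U^{\mu_E}_\delta\,d\nu=V_\delta(E)$, and then applying Tonelli; your route avoids all of that machinery and is the cleaner argument for this inequality.

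One caveat on your optional ``self-contained'' proof of the maximum principle: the final step, propagating $U^{\mu_E}_\delta\le V_\delta(E)$ from $\supp(\mu_E)$ to all of $\bP^1(\bC)$, is not a direct application of the ordinary boundary maximum principle for subharmonic functions, because the potential is only lower semicontinuous and need not satisfy $\limsup_{x\to\zeta}U^{\mu_E}_\delta(x)\le U^{\mu_E}_\delta(\zeta)$ at boundary points; what you need there is precisely Maria's theorem for this kernel, which is essentially the content of the Tsuji result being cited. So your primary option---citing the reference, as the paper does---is the right one, and the sketch should be regarded as a plausibility argument rather than a replacement for it.
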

\begin{proof}
Our proof follows the same lines as the argument in \cite[Theorem III.15]{MR0414898} and \cite[Theorem 7]{FiliPetsche}. Since $-\log \delta(x,y)\geq 0$ on $\bP^1(\bC)$ and $\mu,\nu$ are probability measures, it follows from Tonelli's theorem that
 \[
  \int_E U^\nu_\delta(x) \,d\mu(x) = \int_E U^\mu_\delta(x)\,d\nu(x).
 \]
 Further, it follows from \cite[Theorem III.46]{MR0414898} that $U^\mu_\delta(x) \leq V_\delta(E)$ everywhere, so
 \[
  \int_E U^\nu_\delta(x) \,d\mu(x)\leq V_\delta(E)
 \]
 hence $\inf_{x\in E} U^\nu_\delta(x) \leq V_\delta(E)$. In other direction, we may as well assume that that $U^\nu_\delta(x)<\infty$, and from the usual maximum principle argument it follows that $I(\nu)<\infty$. Following the proof of \cite[Theorem III.7]{MR0414898}, the only change being replacing the logarithmic kernel with the kernel $-\log \delta(x,y)$, that $\nu$ cannot assign any positive measure to a polar set (that is, a set of capacity zero). Therefore, since $U^\mu_\delta(x) = V_\delta(E)$ quasi everywhere for $x\in E$, it follows that
 \[
  \int_E U^\mu_\delta(x)\,d\nu(x) = V_\delta(E),
 \]
 and hence we can conclude that $\sup_{x\in E} U^\nu_\delta(x) \geq V_\delta(E)$.
\end{proof}
From the uniqueness of the elliptic equilibrium measure and the above lemma, we immediately gain the following corollary:
\begin{cor}\label{cor:constant-potential-is-equil}
 If $\nu$ is a Borel probability measure supported on a closed set $E\subset\bP^1(\bC)$ and the elliptic potential $U^\nu_\delta(x)=C$ is constant for all $x\in E\setminus F$, where $F$ is a polar subset, then $\nu$ is the elliptic equilibrium measure of $E$ and $V_\delta(E)=C$.
\end{cor}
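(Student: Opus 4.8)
The plan is to obtain Corollary~\ref{cor:constant-potential-is-equil} from the Theorem above together with the uniqueness of the elliptic equilibrium measure. By that uniqueness it is enough to show $V_\delta(E)=C$ and $I(\nu)=C$, so that $\nu$ attains the infimum defining $V_\delta(E)$. Note that the inf--sup bounds of the Theorem do not by themselves pin down $V_\delta(E)$, because $U^\nu_\delta$ is only controlled on $E\setminus F$; the crux is therefore to show that the polar set $F$ is negligible both for $\nu$ and for the equilibrium measure $\mu_E$. I will assume $C<\infty$, since otherwise $V_\delta(E)=\infty$ and the statement is vacuous.

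First I would check that neither $\mu_E$ nor $\nu$ charges a polar set. For $\mu_E$ this is immediate, since $I(\mu_E)=V_\delta(E)<\infty$ and, exactly as in the proof of the Theorem (following \cite[Theorem III.7]{MR0414898} with the kernel $-\log\delta$ in place of the logarithmic kernel), any probability measure of finite elliptic energy assigns zero mass to every set of capacity zero. For $\nu$, the hypothesis gives $U^\nu_\delta=C$ quasi-everywhere on $\supp(\nu)$; by the maximum principle for the kernel $-\log\delta$ established by Tsuji (cf. \cite{MR0027861}), this forces $U^\nu_\delta\le C$ everywhere on $\bP^1(\bC)$, whence $I(\nu)=\int U^\nu_\delta\,d\nu\le C<\infty$, and so $\nu$ also charges no polar set. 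In particular $\nu(F)=0$, and therefore $I(\nu)=\int_{E\setminus F}C\,d\nu=C$.

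It remains to identify $V_\delta(E)$ with $C$. By Tonelli (legitimate since $-\log\delta\ge0$),
\[
\int_E U^\nu_\delta\,d\mu_E=\int_E U^{\mu_E}_\delta\,d\nu.
\]
By Frostman's theorem for this kernel (already invoked in the proof of the Theorem), $U^{\mu_E}_\delta=V_\delta(E)$ quasi-everywhere, and since $\nu$ charges no polar set the right-hand side equals $V_\delta(E)$. On the other hand $\mu_E(F)=0$ and $U^\nu_\delta=C$ on $E\setminus F$, so the left-hand side equals $C$; hence $V_\delta(E)=C$. Together with $I(\nu)=C$ this shows $\nu$ realizes the infimum $V_\delta(E)$, and the uniqueness of the minimizer recalled before the Theorem gives $\nu=\mu_E$.

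The only real obstacle is the polar-set bookkeeping of the second paragraph: the entire argument depends on upgrading the pointwise identity ``$U^\nu_\delta=C$ off the polar set $F$'' to genuine identities of integrals against $\nu$ and $\mu_E$, and this rests on the maximum and energy principles of Tsuji's potential theory for the kernel $-\log\delta$. Once those are in hand, the remainder is a one-line application of Tonelli and the uniqueness of the elliptic equilibrium measure.
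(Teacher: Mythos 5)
Your third paragraph is, in substance, the argument the paper intends: the paper states the corollary follows ``immediately'' from the inf--sup theorem plus uniqueness of the minimizer, tacitly treating the exceptional set $F$ as invisible to both $\nu$ and $\mu_E$. Granting $\nu(F)=0$, your computation (Tonelli, $U^{\mu_E}_\delta=V_\delta(E)$ q.e.\ on $E$ with $\mu_E$ and $\nu$ charging no polar sets, hence $V_\delta(E)=C=I(\nu)$, then uniqueness) is correct and in fact more careful than the paper's one-line justification. The genuine gap is in your second paragraph, at exactly the point you call the crux. The maximum principle for the kernel $-\log\delta$ needs $U^\nu_\delta\le C$ at every point of $\supp(\nu)$ (or at least $\nu$-a.e.); your hypothesis gives it only quasi-everywhere on $\supp(\nu)$, and upgrading ``q.e.'' to ``$\nu$-a.e.'' already presupposes that $\nu$ charges no polar set --- which is precisely what this step is supposed to establish. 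So the implication ``$U^\nu_\delta=C$ q.e.\ on $\supp(\nu)\Rightarrow U^\nu_\delta\le C$ everywhere'' is circular, and it is false when $\nu$ charges $F$.

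Moreover, this gap cannot be closed without adding a hypothesis, because the statement as printed fails if $\nu$ is allowed to charge the polar set $F$. Take $K$ the unit circle, $a=3$, $E=K\cup\{a\}$, and $\nu=t\,\delta_a+(\mu_K-t\,\omega)$ for small $t>0$, where $\mu_K$ is normalized arclength (the elliptic equilibrium measure of $K$) and $\omega=\omega_\Omega(a,\cdot)$ is harmonic measure of the exterior domain $\Omega$ at $a$; since the Poisson-kernel density of $\omega$ is at most $2$, $\mu_K-t\,\omega\ge0$ for $t\le\tfrac12$. The balayage identity $\int\log|x-u|\,d\omega(u)=\log|x-a|-g_\Omega(a,\infty)$ for $x\in K$ (the same Theorem 4.4 of Saff--Totik used in the proof of Theorem \ref{thm:R-interval}) shows that $U^\nu_\delta$ is constant on all of $K=E\setminus\{a\}$, equal to $C=\log 2-t\log(3/\sqrt{5})$, while $\mu_E=\mu_K$, $V_\delta(E)=\log 2\neq C$, and $\nu\neq\mu_E$; here $F=\{a\}$ is polar and lies in $\supp(\nu)$. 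So the corollary implicitly requires something like $I(\nu)<\infty$, or $\nu(F)=0$, or $U^\nu_\delta\le C$ on all of $\supp(\nu)$ --- hypotheses satisfied in every application made in the paper, where the measures have continuous densities --- and under any such hypothesis the rest of your argument goes through verbatim. As written, though, the crux step does not hold, and no argument can supply it for the statement in this generality.
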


\begin{proof}[Proof of Theorem \ref{thm:C}]
Let $\mu$ be the measure on $\PP^1(\CC)$ described on the right-hand-side of $(\ref{ComplexMEMeas})$.  By Corollary~\ref{cor:constant-potential-is-equil}, in order to show that $\mu$ is the minimal energy measure, it suffices to check that the potential function $U_\delta^\mu(x)$ is constant on $\PP^1(\CC)$.  The group $U(2,\CC)$ of $2\times2$ unitary matrices acts transitively as a group of isometries on $\PP^1(\CC)$ with resepct to the projective metric $\delta(x,y)$.  Further, a straightforward calculation shows that the measure $\mu$ is $U(2,\CC)$-invariant.  Therefore, given $x,x'\in\PP^1(\CC)$, select $f\in U(2,\CC)$ for which $f(x')=x$, and we have 
\begin{equation*}
\begin{split}
U_\delta^\mu(x') & =\int_{\PP^1(\CC)}-\log\delta(f^{-1}(x),y)\,d\mu(y) \\
	& =\int_{\PP^1(\CC)}-\log\delta(x,f(y))\,d\mu(y) \\
	& =\int_{\PP^1(\CC)}-\log\delta(x,y)\,d(f_*\mu)(y) \\
	& =\int_{\PP^1(\CC)}-\log\delta(x,y)\,d\mu(y) =U_\delta^\mu(x).
\end{split}
\end{equation*}
It follows that $U_\delta^\mu(x)$ is constant, as desired.  To calculate the minimal energy, again using Corollary~\ref{cor:constant-potential-is-equil} we have 
\begin{equation*}
\begin{split}
I(\mu) & = U_\delta^\mu(\infty) \\
  & = \int_{\PP^1(\CC)}-\log\delta(z,\infty)\,d\mu(z) \\
  & = \int_{\PP^1(\CC)}\frac{1}{2}\log(1+|z|^2)\,d\mu(z) \\
  & = \int_{0}^{2\pi}\int_{0}^{\infty}\frac{1}{2}\log(1+r^2)\frac{r\,dr\,d\theta}{\pi(1+r^2)^2} =\frac{1}{2}.
\end{split}
\end{equation*}

One can also prove this result by using Theorem 6.1 of \cite[p. 245]{MR1485778}.
\end{proof}

\begin{proof}[Proof of Theorem \ref{thm:R}]
The orthogonal group $O(2,\RR)$ acts transitively as a group of isometries on $\PP^1(\RR)$ with resepct to the projective metric $\delta(x,y)$, and a calculation shows that the measure $\mu$ on $\PP^1(\RR)$ described on the right-hand-side of $(\ref{RealMEMeas})$ is $O(2,\RR)$-invariant.  It follows from the same argument as in the complex case that $\mu$ is the minimal energy measure supported on $\PP^1(\RR)$.  The minimal energy is
\begin{equation*}
\begin{split}
I(\mu) & = U_\delta^\mu(\infty) \\
	 & = \int_{\PP^1(\RR)}-\log\delta(x,\infty)\,d\mu(x) \\
	 & = \frac{1}{\pi}\int_{0}^{\infty}\log(1+x^2)\frac{dx}{(1+x^2)} \\
	 & = \frac{2}{\pi}\int_{0}^{\pi/2}\log(\sec\theta) d\theta =\log 2.
\end{split}
\end{equation*}

We give a second proof that suggests how one can deal with general sets on the real line. Using the Poisson formula for the function $u_w(z)=\log|z-w|,\ \Im(w)<0,$ which is harmonic in the upper half plane, we have
\begin{equation} \label{PoissonR}
u_w(z) = \int_\RR \frac{y\,u_w(x-t)\,dt}{\pi(t^2+y^2)},\quad z=x+iy,\ y>0.
\end{equation}
In particular, if we let $y=1$ and $w=-si$ for $s>0$, then
\begin{equation} \label{PoissonR-2}
 u_{-si}(x+i) = \log\,\abs{x+i + si} = \int_\RR \frac{\log\,\abs{x - t + si}\,dt}{\pi(1+t^2)}
\end{equation}
As this is a logarithmic potential with respect to the Borel measure $\mu_\RR$, it follows from the continuity of logarithmic potentials (see Corollary 5.6 in \cite[p. 61]{MR1485778}) that this equality is also valid for $s=0$, and thus we find that
\begin{equation}\label{equilR}
 \frac{1}{2}\log(1+x^2) =\log\,\abs{x+i} = \int_\RR \frac{\log\,\abs{x - t}\,dt}{\pi(1+t^2)},\quad x\in\RR.
\end{equation}
Thus Corollary \ref{cor:constant-potential-is-equil} confirms that $\mu_\RR$ is indeed the elliptic equilibrium measure of $\RR,$ as 
\begin{align*}
U^{\mu_\RR}_\delta(s) &= - \int_\RR \frac{\log|x-s|\,dx}{\pi(1+x^2)} + \frac{1}{2}\log(1+s^2) + \frac{1}{2} \int \frac{\log(1+x^2)\,dx}{\pi(1+x^2)} \\ &= \int_\RR \frac{\log(1+x^2)\,dx}{2\pi(1+x^2)},\quad s\in\RR. 
\end{align*}
It follows as before that
\begin{align*}
I(\mu_\RR) = \int_\RR \frac{\log(1+x^2)\,dx}{2\pi(1+x^2)} = \int_\RR \frac{u_{-i}(x)\,dx}{\pi(1+x^2)} = u_{-i}(i) = \log 2,
\end{align*}
where the integral above is computed from the Poisson formula \eqref{PoissonR} for $u_{-i}(z)=\log|z+i|.$

A third proof of this result could be given using the Hilbert transform, following an argument parallel to the proof of Theorem 1(b) of \cite{FiliPetsche}; we do not work out the details here.
\end{proof}

Equation \eqref{equilR} indicates that the potential of the unit point mass $\delta_i$ coincides with the  potential of $\mu_\RR$ on the real line, i.e., $\mu_\RR$ is the balayage of $\delta_i$ out of the upper half plane in the sense of Theorem 4.1 of \cite[p. 110]{MR1485778}. This idea is extended in the proof of Theorem \ref{thm:R-interval}.

\begin{proof}[Proof of Theorem \ref{thm:R-interval}]
We develop the idea of balayage for the unit point mass $\delta_i$ at $i$ from the previous proof. The balayage of $\delta_i$ from the domain $\Omega=\overline{\CC}\setminus E$ onto $E\subset\RR$ is given by the harmonic measure $\omega_{\Omega}(i,\cdot)$, see Section II.4 of \cite{MR1485778} and Section 4.3 of \cite{MR1334766}. It follows from Theorem 4.4 of \cite[p. 115]{MR1485778} that the logarithmic potential of $\omega_{\Omega}(i,\cdot)$ satisfies
\begin{align*}
&\int \log\frac{1}{|x-t|}\,d\omega_{\Omega}(i,t) + \frac{1}{2}\log(1+x^2) \\ &=  \int \log\frac{1}{|x-t|}\,d\omega_{\Omega}(i,t) -  \int \log\frac{1}{|x-t|}\,d\delta_i(t) = \int g_\Omega(t,\infty)\,d\delta_i(t) = g_\Omega(i,\infty).
\end{align*}
for quasi every $x\in E.$ Hence $\omega_{\Omega}(i,\cdot)$ is the elliptic equilibrium measure of $E$ by Corollary \ref{cor:constant-potential-is-equil}, because the elliptic potential of $\omega_{\Omega}(i,\cdot)$ is constant quasi everywhere on $E$. Furthermore, we obtain that 
\begin{align*}
I(\mu_E) &= \int \left(\int \log\frac{1}{|x-t|}\,d\omega_{\Omega}(i,t) + \frac{1}{2}\log(1+x^2) \right)d\omega_{\Omega}(i,x) \\ &+ \frac{1}{2} \int \log(1+t^2)\,d\omega_{\Omega}(i,t) \\ &= g_\Omega(i,\infty) + \frac{1}{2} \int \log(1+t^2)\,d\omega_{\Omega}(i,t).
\end{align*}

We need to find $\omega_{\Omega}(i,\cdot)$ explicitly for $E=[-r,r],\ r>0.$ This is conveniently done by using conformal invariance of harmonic measures, see Theorem 4.3.8 on \cite[p. 101]{MR1334766}. If $\Phi$ is a conformal mapping of $\Omega=\overline{\CC}\setminus [-r,r]$ onto $\Delta=\overline{\CC}\setminus\{t\in\CC:|t|\le 1\}$ such that $\Phi(i)=\infty,$ then the image of $\omega_{\Omega}(i,\cdot)$ under $\Phi$ is $\omega_{\Delta}(\infty,\cdot) = |dt|/(2\pi)$ supported on $\partial \Delta = \{t\in\CC:|t|= 1\}$. The mapping $\Phi$ may constructed as the composition of two standard conformal mappings. These are   $w=\Phi_1(z)=(z+\sqrt{z^2-r^2})/r$ that maps $\Omega$ onto $\Delta$ with $w_0=\Phi_1(i)=(\sqrt{r^2+1}+1)i/r,$ and $t=\Phi_2(w)=(\overline{w}_0 w -1)/(w-w_0)$ that is a self map of $\Delta$ sending $w_0$ to infinity. Denoting the upper limiting values of $\Phi$ on $[-r,r]$ by $\Phi_+,$ and the lower limiting values by $\Phi_-,$ we obtain the following expression for $d\omega_{\Omega}(i,x)$:
\begin{align*}
\frac{(|\Phi'_+(x)|+|\Phi'_-(x)|)\,dx}{2\pi} &= \frac{(\sqrt{r^2+1}+1)\,dx}{\pi\sqrt{r^2-x^2}(x^2+(\sqrt{r^2+1}+1-\sqrt{r^2-x^2})^2)} \\ &+ \frac{(\sqrt{r^2+1}+1)\,dx}{\pi\sqrt{r^2-x^2}(x^2+(\sqrt{r^2+1}+1+\sqrt{r^2-x^2})^2)},\ x\in(-r,r).
\end{align*}
To evaluate $I_\delta(\mu_{[-r,r]})$ explicitly, we use Theorem 5.1 of \cite[p. 124]{MR1485778} that gives
\begin{align*}
&\int \log|z-t|\,d\delta_i(t) + \int g_\Omega(z,t)\,d\delta_i(t) \\ &= \int \log|z-t|\,d\omega_{\Omega}(i,t) + \int g_\Omega(t,\infty)\,d\delta_i(t), \quad z\in\Omega.
\end{align*}
It follows that 
\begin{align*}
\int \log|z-t|\,d\omega_{\Omega}(i,t) = g_\Omega(z,i) + \log|z-i| - g_\Omega(i,\infty), \quad z\in\Omega.
\end{align*}
Since the left hand side is continuous at $z=i$, we obtain that
\begin{align*}
\int \log|i-t|\,d\omega_{\Omega}(i,t) = \lim_{z\to i} (g_\Omega(z,i) + \log|z-i|) - g_\Omega(i,\infty).
\end{align*}
Hence
\begin{align*}
I(\mu_{[-r,r]}) &= g_\Omega(i,\infty) + \int \log|i-t|\,d\omega_{\Omega}(i,t) = \lim_{z\to i} (g_\Omega(z,i) + \log|z-i|).
\end{align*}
Furthermore, it is well known that $g_\Omega(z,i)=\log|\Phi(z)|,\ z\in\Omega,$ see \cite[p. 113]{MR1334766}. This allows to compute the limit
\begin{align*}
\lim_{z\to i} (g_\Omega(z,i) + \log|z-i|) &= \lim_{z\to i} \log|\Phi(z)(z-i)| = \lim_{z\to i} \log|\Phi_2\left(\Phi_1(z)\right)(z-i)| \\ &= \log\left| \lim_{w\to w_0} (\overline{w}_0 w - 1)\, \lim_{z\to i} \frac{z-i}{w-w_0} \right|\quad (w=\Phi_1(z)) \\ &= \log \left|\frac{|w_0|^2-1}{\Phi_1'(i)}\right| = \log\frac{2\sqrt{r^2+1}}{r}.
\end{align*}
\end{proof}

\section{Potential-theoretic lower bounds on the Arakelov height}

\begin{proof}[Proof of Theorems~\ref{TotallyvadicThm} and \ref{TotallyvadicIntervalThm}]
We prove $(\ref{TotallyvadicBound1})$; the proofs of $(\ref{TotallyvadicBound2})$ and $(\ref{TotallyvadicBound3})$ are the same with trivial modifications.  Suppose we have a sequence $\{\alpha_k\}$ of distinct points in $\PP^1(L_S)$ with $\har(\alpha_k)\to\ell$ for some $\ell\in\RR$.  We must show that $\ell$ is greater than or equal to the right-hand side of $(\ref{TotallyvadicBound1})$.

For each point $\alpha_k$, let $[\alpha_k]_\infty$ be the Borel probability measure on $\PP^1(\CC)$ supported equally on the $\Gal(\Qbar/\QQ)$-conjugates of $\alpha_k$.  Passing to a subsequence, we may assume without loss of generality that the sequence of measures $\{[\alpha_k]_\infty\}$ converges weakly to a Borel probability measure $\nu_\infty$ on $\PP^1(\CC)$; this follows from Prokhorov's theorem  and the compactness of $\PP^1(\CC)$.

Similarly, for each place $p\in S$, let $[\alpha_k]_p$ be the Borel probability measure on $\PP^1(\QQ_p)$ supported equally on the $\Gal(\Qbar/\QQ)$-conjugates of $\alpha_k$.  Again passing to a finite number of subsequences, we may assume without loss of generality that for each $p\in S$, the sequence of measures $\{[\alpha_k]_p\}$ converges weakly to a Borel probability measure $\nu_p$ on $\PP^1(\QQ_p)$.

We have
\begin{equation}\label{LimInfLowerBound}
\begin{split}
\ell & = \lim_{k\to+\infty}\har(\alpha_k) \\
	& \geq \frac{1}{2}\liminf_{k\to+\infty}\sum_{v\in S\cup\{\infty\}}D_v(\alpha_k) \\
	& \geq \frac{1}{2}\sum_{v\in S\cup\{\infty\}}\liminf_{k\to+\infty} D_v(\alpha_k) \\
	& \geq \frac{1}{2}\sum_{v\in S\cup\{\infty\}}I_v(\nu_v) \\
	& \geq \frac{1}{4}+\frac{1}{2}\sum_{p\in S}\frac{p\log p}{p^2-1}
\end{split}
\end{equation}
as desired.  In $(\ref{LimInfLowerBound})$, the first inequality uses $(\ref{ArakelovDefEnergySum})$ and  the nonnegativity of the $D_p(\alpha)$ for $p\in M_{\QQ}\setminus(S\cup\{\infty\})$; the second inequality uses a basic property of limits; the third inequality uses \cite[Lemma 7.54]{MR2599526}, a general result about discrete approximations to energy integrals; and the fourth inequality uses $(\ref{FiliPetscheNonArch})$ and Theorem~\ref{thm:C}.
\end{proof}

\end{document}